\def\factordesubfig{0.45}
\newcommand{\subfigdef}[3][\factordesubfig]{\xdef\factordesubfig{#1}%
   \subfloat[#3]{\includegraphics[width=#1\textwidth]{#2}}%
}
\newcommand{\TeoremaAmbFinalMarcat}[1]{%
   \expandafter\gdef\csname end#1\endcsname{%
      \leavevmode\unskip\penalty9999
\hbox{}\nobreak\hfill\quad\hbox{\vrule width.4em
height.46em depth0pt}%
      \@endtheorem%
   }
}
\newtheorem{theorem}{Theorem}[section]
\newtheorem{proposition}[theorem]{Proposition}
\newtheorem{lemma}[theorem]{Lemma}
\theoremstyle{definition}
\newtheorem{remark}[theorem]{Remark} \TeoremaAmbFinalMarcat{remark}
\newtheorem{example}[theorem]{Example} \TeoremaAmbFinalMarcat{example}
\newtheorem{algorithm}[theorem]{Algorithm}
\newtheorem{definition}[theorem]{Definition}
\renewcommand\theenumi{\@alph\c@enumi}
\renewcommand\theenumii{\@alph\c@enumii}
\renewcommand\theenumiii{\@alph\c@enumiii}
\renewcommand\theenumiv{\@alph\c@enumiv}
\def\@map#1#2[#3]{\mbox{$#1 \colon #2 \longrightarrow #3$}}
\def\map#1#2{\@ifnextchar [{\@map{#1}{#2}}{\@map{#1}{#2}[#2]}}
\DeclareMathOperator{\supp}{supp}
\DeclareMathOperator*{\esssup}{ess\,sup}
\newcommand{\evalat}[1]{\bigr\rvert_{#1}}
\newcommand{\set}[2]{\ensuremath{\left\{#1 \,\colon #2\right\}}}
\newcommand{\Rot}[2]{R_{#1}^{#2}}
\newcommand{\SP}[1]{\left\langle #1 \right\rangle}
\newcommand{\abs}[1]{\left| #1 \right|}
\newcommand{\norm}[1]{\left\| #1 \right\|}
\newcommand{\step}[2][Step]{\par\par\medskip\par\par
  \noindent\textbf{#1\ #2.}}
\newcommand{\Fr}{\mathfrak{F}}
\newcommand{\N}{\mathbb{N}}
\newcommand{\Z}{\mathbb{Z}}
\newcommand{\R}{\mathbb{R}}
\newcommand{\SI}{\mathbb{S}^1}
\newcommand{\Lii}{\mathscr{L}^{2}}
\newcommand{\Liin}{\mathscr{L}^{\infty}}
\newcommand{\Li}{\mathscr{L}^{1}}
\newcommand{\V}{\mathcal{V}}
\newcommand{\W}{\mathcal{W}}
\renewcommand{\S}{\mathcal{S}}
\newcommand{\Ccla}[1]{\mathcal{C}^{#1}}
\newcommand{\Bes}[1][s]{\mathscr{B}^{#1}_{\infty,\infty}}
\newcommand{\Reg}{\mathrm{R}}%
\newcommand{\zeron}[1]{\mathbf{Z}_{({#1})}}
\newcommand{\PER}[1]{{#1}^{\scriptscriptstyle\mathrm{PER}}}
\newcommand{\Fse}{\mathfrak{F}_{\sigma,\varepsilon}}%
\title[Estimate of the regularity for \textsf{SNA}s]{A numerical
estimate of the regularity of a family of Strange Non--Chaotic
Attractors}
\author[Ll. Alsed\`a, J.M. Mondelo and D. Romero]{
Llu\'{\i}s Alsed\`a i Soler\\
Josep Maria Mondelo Gonz\'alez\\ and \\ David Romero i 
S\`anchez$^\ast$.}
\address{Departament de Matem\`atiques, Edifici Cc, Universitat
Aut\`onoma de Barcelona, 08913 Cerdanyola del Vall\`es, Barcelona,
Spain}
\email{alseda@mat.uab.cat, jmm@mat.uab.cat, dromero@mat.uab.cat}
\keywords{Wavelets, regularity, quasiperiodically forced system}
\subjclass[2010]{Primary: 37M99, 37C55, 37C70, 42C40, 26A16, 30H25}
\thanks{The authors have been partially supported by MINECO grant
numbers MTM2008-01486, MTM2011-26995-C02-01 and MTM2014-52209-C2-1-P}
\date{\today}
\begin{document}
\begin{abstract}
We estimate numerically the regularities of a family of Strange
Non--Chaotic Attractors related with one of the models studied in
\cite{GOPY} (see also \cite{Kell}). To estimate these regularities we
use wavelet analysis in the spirit of \cite{LlaPe} together with some
ad-hoc techniques that we develop to overcome the theoretical
difficulties that arise in the application of the method to the
particular family that we consider. These difficulties are mainly due to
the facts that we do not have an explicit formula for the attractor and
it is discontinuous almost everywhere for some values of the parameters.
Concretely we propose an algorithm based on the Fast Wavelet Transform.
Also a quality check of the wavelet coefficients and regularity
estimates is done.
\end{abstract}
\maketitle
\begingroup
\def\thempfn{$\ast$\ }
\footnotetext{%
Corresponding author.
Address: Departament de Matem\`atiques,
Edifici Cc,
Universitat Aut\`onoma de Barcelona,
08913 Cerdanyola del Vall\`es,
Barcelona,
Spain.
\emph{Tel.:} (+34)935813071
\emph{Fax:} (+34)935812790
\emph{E-mail address:} {\tt dromero@mat.uab.cat}
}
\endgroup
\section{Introduction}

The aim of this paper is to develop techniques and algorithms to
compute approximations of (geometrically) extremely complicate
dynamical invariant objects by means of wavelet expansions.
Moreover, from the wavelet coefficients we want to derive an
estimate of the regularity of these invariant objects.
In the case when the theoretical regularity is known,
the comparison between both values gives a natural and good quality
test of the algorithms and approximations.

In this paper the invariant objects that we study and consider when
developing our algorithms are Strange Non-chaotic Attractors.
They appear in a natural way in families of quasiperiodically forced
skew products on the cylinder of the form
\begin{equation}\label{skpgen}
\begin{array}{r c c c}
\Fr_{\sigma,\varepsilon}:&\SI\times\R&
\xrightarrow{\phantom{xxxxxxxxx}}&\SI\times\R\\
&(\theta,x) &\longmapsto&
(\Rot{\omega}{}(\theta),F_{\sigma,\varepsilon}(\theta,x)),
\end{array}
\end{equation}
where
{\map{F_{\sigma,\varepsilon}}{\SI\times\R}[\R]} is continuous and
$\mathcal{C}^1$ with respect to the second variable,
$\Rot{\omega}{}(\theta)=\theta + \omega \pmod{1}$ with $\omega \in \R 
\setminus \mathbb{Q},$
$\SI=\R/\Z=[0,1)$ denotes the circle and $\varepsilon,\sigma\in\R^+.$
These systems have the important
property that any fibre, $\{\theta\}\times\R$, is mapped into another
fibre, $\{\Rot{\omega}{}(\theta)\}\times\R$.

Our main goal will be to derive approximations in terms of wavelets
of the invariant maps {\map{\varphi}{\SI}[\R]}:
$\varphi(\Rot{\omega}{}(\theta))= 
F_{\sigma,\varepsilon}(\theta,\varphi(\theta))$.
Under certain conditions the graphs of these invariant maps have very 
complicate geometry
where roughly speaking, the word \emph{complicate} means non-piecewise 
continuous.
In such case, we will say that the graph of
$\varphi$ is a \emph{Strange Non-chaotic Attractor (\textsf{SNA})}.
A usual particular case of \textsf{SNA} is when the
invariant function is positive in a set of full Lebesgue measure and 
vanishes
on a residual set.

A usual standard approach is to use Fourier expansions (rather than 
Wavelet ones)
when approximating dynamical invariant objects.
In the \textsf{SNA}'s framework this approach has a serious drawback:
an accurate approximation of $\varphi$ demands a high number of Fourier 
modes due the appearance
of strong oscillations (see e.g.~\cite{jorba}). One natural way to 
overcome this problem is by
using other orthonormal basis such as wavelets and the multi-scale
methods (see e.g.~\cite{Cohen,Mallat}). One of the advantages of
this approach is that wavelets also define certain regularity spaces
$\Bes[s]$ (see e.g~\cite{HeWe,Cohen,Meyer,Tri02}) that provide a natural
framework for the approximations that one gets.

Precisely, the regularity can be considered as a trait of how $\varphi$
becomes strange in terms of functional spaces. For example, in
\cite{LlaPe}, the authors make numerical implementations of wavelet
analysis to estimate the ``\emph{positive}'' regularity of invariant
objects which are graphs of functions in appropriate spaces. However,
due to the complexity of the \textsf{SNA}s described above we need to
consider the possibility that these objects have zero or even negative
regularity (see \cite{Cohen}). Hence, the techniques of \cite{LlaPe}
need to be extended to this case. To this end, we develop ad-hoc
techniques to overcome the theoretical difficulties of the objects we
study in performing a wavelet analysis, in the same spirit of
\cite{LlaPe}, to estimate the regularity of such $\varphi$. Our
wavelet analysis will be based on the Fast Wavelet Transform (see
e.g~\cite{Mallat}).

The computation of the regularity (depending on parameters) can give
some insight in the study of the fractalization or other routes of
creation of \textsf{SNA} and help in detecting this bifurcation.

We apply the above program to a slight modification of the system
considered in~\cite{GOPY}. Indeed, the
attractor obtained in \cite{GOPY} (as shown by Keller in \cite{Kell}),
is the graph of an upper semi-continuous function from the circle to
$\R$ in the \emph{pinched case} (that is, when there exists a fibre
whose image is degenerate to a point), whereas in the non pinched one
the attractor is the graph of a map with the same regularity as the skew
product (see also \cite{Star2} and \cite{Star1}).
As we will see, the wavelet coefficients together with the computed 
regularity
detect well the functional space jump associated to the creation of the 
\textsf{SNA}.

This paper is organized in two parts. The first one is devoted to make a
survey on wavelets and regularity. Whereas the second one is devoted
to apply these techniques to the \textsf{SNA} case.
More concretely, in Section~\ref{sec::ondetes} we recall some topics 
about
the theory of wavelet bases. Section~\ref{sec::onthenotion} is devoted
to review the notion of regularity through Besov functional spaces and
discuss it by means of a particular simple example. In Besov spaces the
regularity can be any real number (in contrast to H\"older regularity
defined only for positive regularities). In
Section~\ref{sec::WaveletsRegularity} we review the relation between the
regularity and the wavelet coefficients of a function.
Section~\ref{sec::TheAlgorithm} is devoted to present and test a
methodology to numerically estimate regularities based on the previous 
sections.

Finally, in the second part, in Section~\ref{sec::Statement} we survey
on the family of Strange Non--Chaotic Attractors that we will study.
In particular, we state Keller's Theorem and we remark crucial aspects
of its proof that will be used in devising the algorithm that we 
propose.
In Section~\ref{sec::extended_alg}, we present how one can
overcome some of the theoretical difficulties related with the 
strangeness
of the \textsf{SNA}. In~\ref{sec::arasi_alg}, we perform the algorithm 
to compute the
regularity of the attractors and in Section~\ref{sec::Conclusions}, the 
results of this
computation, for a particular instance of \textsf{SNA}'s, are presented
and discussed.
\section{A survey on wavelets}\label{sec::ondetes}
We aim at approximating by means of wavelets a certain class of
functions from the circle $\R\setminus\Z$ to an interval of the real
line. Recall that a standard approach used in the literature to compute
and work with invariant objects of systems exhibiting periodic or
quasi-periodic behaviour is to use finite Fourier approximations
(trigonometric polynomials), namely functions of the form
\[
\varphi(\theta)=a_0+\sum_{n=1}^{N}
\left(a_n\cos(n\theta)+b_n\sin(n\theta)\right).
\]
As it has been said, in this paper instead we aim
at using finite wavelet expansions of the form:
\[
\varphi(\theta)=a_0+\sum_{j=0}^{N}\sum_{n=0}^{N_j}d_{j,n}\psi_{j,n}
(\theta),
\]
where $\psi_{j,n}(\theta)$ is obtained by translation and dilation of a
mother wavelet $\psi(x)$. To be explicit, let us start by introducing
the orthonormal wavelet basis of $\Lii(\R)$. A natural way to do it is
via the notion of Multiresolution Analysis. We refer the reader to
\cite{Mallat, HeWe} for more detailed and comprehensive expositions.
\begin{definition}\label{def08}
A sequence of closed subspaces $\{\V_{j}\}_{j\in\Z}$ of $\Lii(\R)$ is a
\emph{Multiresolution Analysis} (or simply a \emph{MRA}) if it satisfies
the following six properties:
\begin{enumerate}
\item
$
 \{0\} \subset \dots \subset \V_{1} \subset \V_{0}
       \subset \V_{-1} \subset \dots \subset \Lii(\R).
$
\item  $\{0\}=\bigcap_{j\in\Z}\V_{j}.$
\item $\mathrm{clos}\left( \bigcup_{j\in\Z}\V_{j} \right) = \Lii(\R).$
\item There exists a function $\phi$ whose integer translates,
$\phi(x-n)$, form an orthonormal bases of $\V_{0}$. Such function is
called the \emph{scaling function}.
\item For each $j\in\Z$ it follows that $f(x)\in\V_{j}$ if and only if
$f(x-2^{j}n)\in\V_{j}$ for each $n\in\Z$.
\item For each $j\in\Z$  it follows that $f(x)\in\V_{j}$ if and only
if $f(x/2)\in\V_{j+1}$.
\end{enumerate}
\end{definition}
Before continuing the explanation, let us recall that for
$f\in\Lii(\R)$,
\[
\widehat{f}(\xi) = \int_{\R} f(x)e^{-i\xi x}\ dx,
\xi\in\R,
\]
denotes the \emph{Fourier transform} of $f$ and $f^{\vee}(x)$
\[
f^{\vee}(x) = \dfrac{1}{2\pi} \int_{\R} f(\xi)e^{i\xi x}\ d\xi,
x\in\R
\]
stands for the \emph{inverse Fourier transform}. If we fix an MRA, it
follows that $\V_{j}$ has an orthonormal basis
$\{\phi_{j,n}\}_{n\in\Z}$, for every $j$, where
\[
\phi_{j,n}(x) = 2^{-j/2} \phi\left( \frac{x-2^{j}n}{2^{j}} \right).
\]
Now, define the subspace  $\W_{j}$ as the orthogonal complement of
$\V_{j}$ on $\V_{j-1}$. That is,
\begin{equation}\label{estruct_espai}
\V_{j-1}=\W_{j}\oplus\V_{j}.
\end{equation}
Therefore, by the inclusion of the spaces $\V_{j}$ we have
\begin{equation}\label{nova01}
\Lii(\R) =
 \mathrm{clos}\left(\bigoplus_{j\in\Z}\W_{j} \right)=
 \mathrm{clos}\left(\V_0 \oplus
   \bigoplus_{j=-\infty}^{0} \W_{j}\right).
\end{equation}
The \emph{mother wavelet} $\psi\in\W_{0}$ is defined to be the function
whose Fourier transform is
\begin{equation}\label{equ33}
\widehat{\psi}(\xi) = \frac{1}{\sqrt{2}} e^{-i\xi}
      \widehat{h}^{*}(\xi+\pi)\widehat{\phi}(\xi)
\end{equation}
where $\widehat{h}^{*}(\xi)$ is the complex conjugate of
\begin{equation}\label{tonta}
\widehat{h}(\xi)=\sum_{n\in\Z}h[n]e^{-in\xi},
\end{equation}
with $\widehat{h}(0)=\sqrt{2}$ and
$
h[n] = \SP{\frac{1}{\sqrt{2}} \phi\left(\frac{x}{2}\right),
\phi(x-n)}$ for $n\in\Z$. The sequence $h[n]$ is called the
\emph{scaling filter} (or the \emph{low pass filter}) of the
Multiresolution Analysis. We define the support of $h[n]$, denoted by
$\supp(h)$, as the minimum subset $\mathfrak{I}$ of $\mathbb{Z}$ such
that $\mathfrak{I}=\{\ell,\ell+1,\dots, \ell'\}$ is a set of consecutive
integers and
$$h[n]=0\ {\text{for every $n\in\mathbb{Z}\backslash\mathfrak{I}$}}.$$
The following result (see \cite[Theorem~7.3]{Mallat}) allows to obtain
the wavelet basis from the scaling function:
\begin{theorem}[Mallat, Meyer]\label{Mame}
The mother wavelet given by Equation~\eqref{equ33} verifies that, for
each integer $j$, the family $\{\psi_{j,n}\}_{n\in\Z}$ is an
orthonormal basis of $\W_{j}$, where:
\[
\psi_{j,n}(x) = 2^{-j/2}\psi\left(\frac{x-2^{j}n}{2^{j}}\right).
\]
As a consequence, the family $\{\psi_{j,n}\}_{(j,n)\in\Z\times\Z}$ is
an orthonormal basis of $\Lii(\R)$.
\end{theorem}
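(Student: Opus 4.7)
The plan is to prove the result in three stages: (i) establish that $\{\psi_{0,n}\}_{n\in\Z}$ is an orthonormal basis of $\W_0$; (ii) transfer this to each $\W_j$ by the dilation property (f) of Definition~\ref{def08}; (iii) assemble the global basis using the orthogonal decomposition~\eqref{nova01}.

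For stage~(i), I would work on the Fourier side. Since $\phi(\cdot/2)/\sqrt{2}\in\V_1\subset\V_0$, the definition of $h[n]$ gives the two-scale relation $\widehat{\phi}(2\xi)=\tfrac{1}{\sqrt{2}}\widehat{h}(\xi)\widehat{\phi}(\xi)$, while the orthonormality of the integer translates of $\phi$ is equivalent to the periodization identity $\sum_{k\in\Z}|\widehat{\phi}(\xi+2\pi k)|^2=1$ a.e. Combining these two facts and splitting the sum according to the parity of $k$ yields the quadrature mirror condition
\[
|\widehat{h}(\xi)|^2+|\widehat{h}(\xi+\pi)|^2=2.
\]
Substituting~\eqref{equ33} into $\sum_{k}|\widehat{\psi}(\xi+2\pi k)|^2$ and using the same parity split then reduces the orthonormality of $\{\psi(\cdot-n)\}_n$ to this very identity. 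A similar computation of $\sum_{k}\widehat{\psi}(\xi+2\pi k)\overline{\widehat{\phi}(\xi+2\pi k)}$, exploiting the cross term $\widehat{h}^*(\xi/2+\pi)\widehat{h}(\xi/2)+\widehat{h}^*(\xi/2)\widehat{h}(\xi/2+\pi)$, verifies that $\psi\perp\phi(\cdot-n)$ for every $n$, so $\psi\in\W_0$.

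The delicate step is completeness, i.e.\ that $\overline{\mathrm{span}}\{\psi_{0,n}\}_n=\W_0$. I would take any $g\in\W_0\subset\V_{-1}$ and expand it as $g=\sum_k c_k \phi_{-1,k}$ in the orthonormal basis of $\V_{-1}$. The two-scale relations express $\phi_{0,m}$ and $\psi_{0,m}$ as combinations of the $\phi_{-1,k}$ with coefficients $h[k-2m]$ and $g[k-2m]=(-1)^k h^*[1-k-2m]$ respectively. The quadrature mirror identity ensures that the corresponding infinite matrix is unitary, so the families $\{\phi_{0,m}\}\cup\{\psi_{0,m}\}$ and $\{\phi_{-1,k}\}$ are related by an orthonormal change of basis. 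Rewriting $g$ in the combined basis and using $g\perp\V_0$ to kill the $\phi_{0,m}$-coefficients identifies $g$ as a linear combination of $\{\psi_{0,m}\}$. The main obstacle is precisely this bookkeeping: showing that the change-of-basis is unitary reduces again to the quadrature mirror identity, but one must handle the interplay between the $h$ and $g$ coefficients carefully.

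For stage~(ii), property (f) of Definition~\ref{def08} says that the dilation $D_j\colon f(x)\mapsto 2^{-j/2}f(x/2^j)$ is a unitary isomorphism of $\Lii(\R)$ that carries $\V_0$ onto $\V_j$ and $\V_1$ onto $\V_{j+1}$, hence $\W_0$ onto $\W_j$. Since $D_j$ sends $\psi_{0,n}$ to $\psi_{j,n}$, the orthonormal basis property transfers from $\W_0$ to every $\W_j$. Stage~(iii) is then immediate from~\eqref{nova01}: the decomposition $\Lii(\R)=\mathrm{clos}(\bigoplus_{j\in\Z}\W_j)$ is by orthogonal direct sum, so concatenating the orthonormal bases of each $\W_j$ yields an orthonormal basis of $\Lii(\R)$ indexed by $(j,n)\in\Z\times\Z$.
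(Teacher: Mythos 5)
The paper gives no proof of this theorem — it is quoted verbatim from \cite[Theorem~7.3]{Mallat} — and your sketch is precisely the standard Fourier-domain argument of that reference: two-scale relation plus orthonormality of the translates of $\phi$ giving the quadrature mirror identity $|\widehat{h}(\xi)|^2+|\widehat{h}(\xi+\pi)|^2=2$, a parity split of the periodized sums for orthonormality of $\{\psi(\cdot-n)\}$ and for $\psi\perp\V_0$, completeness via the unitary change of basis between $\{\phi_{-1,k}\}$ and $\{\phi_{0,m}\}\cup\{\psi_{0,m}\}$, transfer to each $\W_j$ by the unitary dilation, and assembly through \eqref{nova01}. This is correct; note only that your periodization computations implicitly use the relation in the form $\widehat{\psi}(2\xi)=\tfrac{1}{\sqrt{2}}e^{-i\xi}\widehat{h}^{*}(\xi+\pi)\widehat{\phi}(\xi)$ (as in Mallat), rather than the formula exactly as printed in \eqref{equ33}, and that the vanishing of the cross term $\sum_{k}\widehat{\psi}(\xi+2\pi k)\overline{\widehat{\phi}(\xi+2\pi k)}$ relies on the sign coming from the phase factor $e^{-i\xi}$ in $\widehat{g}$.
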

Recall that we want to approximate maps $f \in \Lii(\R)$ by linear
combinations of wavelets. By \eqref{nova01} and the theorem above, the
projection of $f$ to $\V_{-J} \subset \Lii(\R):$
\[
  \sum_{n\in\Z} \SP{f,\phi_{-J,n}} \phi_{-J,n},
\]
is a good approximation of $f$  provided that $J > 0$ is large enough.
We want to rewrite such an approximation as linear combination of
wavelets of the form
\[
f \sim
  \sum_{n\in\Z} \SP{f,\phi_{0,n}} \phi_{0,n} +
  \sum_{j=0}^{J-1} \sum_{n\in\Z} \SP{f,\psi_{-j,n}} \psi_{-j,n}
\in \V_0 \oplus \bigoplus_{j=0}^{J-1} \W_j.
\]
To do it, as usual, we define the coefficients
\[
  a_{j}[n] := \SP{f,\phi_{j,n}}
  \quad\text{and}\quad
  d_{j}[n] := \SP{f,\psi_{j,n}}
\]
for $j,n\in\Z.$ With this notation, the initial approximation of $f$
becomes
\[
  \sum_{n\in\Z} a_{-J}[n] \phi_{-J,n}.
\]
By \eqref{estruct_espai} we have
\begin{equation}\label{estruct_espai_coeficients}
\sum_{n \in \Z} a_{-j}[n] \phi_{-j,n} =
\sum_{n \in \Z} a_{-j+1}[n] \phi_ {-j+1,n} +
\sum_{n \in \Z} d_{-j+1}[n] \psi_{-j+1,n}
\end{equation}
for every $j \in \Z.$

To obtain the coefficients $a_{-j+1}[n]$ and $d_{-j+1}[n]$ from
$a_{-j}[n],$ we use the \emph{Fast Wavelet Transform (FWT)} given by
(see \cite[Theorem~7.7]{Mallat}):
\begin{equation}\label{FWT}
\left\{\begin{aligned}
& a_{j+1}[p] := \sum_{n\in\Z} h[n-2p] a_{j}[n]
      \quad\text{and}\quad
      d_{j+1}[p] := \sum_{n\in\Z} g[n-2p] a_{j}[n]; \\
& \text{with}\ g[p] = (-1)^{1-p}h[1-p]
\end{aligned}\right.
\end{equation}
for every $j,p\in\Z.$ Hence, from the iterative use of
\eqref{estruct_espai_coeficients} and \eqref{FWT} starting with the
approximation $\sum_{n\in\Z} a_{-J}[n] \phi_{-J,n}$ we obtain the
approximation of $f$ that we are looking for:
\[
f \sim
  \sum_{n\in\Z} a_0[n] \phi_{0,n} +
  \sum_{j=0}^{J-1} \sum_{n\in\Z} d_{-j}[n] \psi_{-j,n}
\in \V_0 \oplus \bigoplus_{j=0}^{J-1} \W_j.
\]

For (numerical) applications such infinite approximations are usually
not available since we often work with finite information about our
function. For this we need a similar theory for subspaces of $\V_j$ and
$\W_j$ of finite dimension. For $j \ge 0$ we define
\begin{align*}
\V_{-j}^*:= & \SP{\phi_{-j,0}, \phi_{-j,1}, \dots, \phi_{-j,2^j-1}}
	      \subset \V_{-j},\ \text{and}\\
\W_{-j}^* := & \SP{\psi_{-j,0}, \psi_{-j,1}, \dots, \psi_{-j,2^j-1}}
	      \subset \W_{-j},
\end{align*}
where $\SP{f_1,f_2,\dots,f_n}$ denotes the subspace of $\Lii(\R)$
generated by the linear combinations of $f_1,f_2,\dots,f_n$. From the
comment at the end of Section~7.3.1 of \cite{Mallat} (see also
\cite[Lemma~3.26]{Frazier} for a more detailed account), it follows that
\begin{equation}\label{estruct_espai_finit}
\V_{-j+1}^*=\W_{-j}^*\oplus\V_{-j}^*
\end{equation}
for every $j > 0$. Hence, given a function $f \in \Lii(\R)$ we can take
a good finite approximation of the map given by its projection to
$\V_{-J}^*:$
\begin{equation}\label{initial-projection}
f \sim \sum_{n=0}^{2^{J}-1} a_{-J}[n] \phi_{-J,n},
\end{equation}
provided that $J$ is large enough. Again, we are interested in writing
such an approximation as linear combination of wavelets, but in this
case this expansion must be finite:
\[
f \sim
  a_0[0] \phi_{0,0} +
  \sum_{j=0}^{J-1} \sum_{n=0}^{2^j-1} d_{-j}[n] \psi_{-j,n}
\in \V_0^* \oplus \bigoplus_{j=0}^{J-1} \W_{-j}^*.
\]

To obtain this expression observe that \eqref{estruct_espai_finit}
implies
\begin{equation}\label{estruct_espai_finit_coeficients}
\sum_{n=0}^{2^{j}-1} a_{-j}[n] \phi_{-j,n} =
\sum_{n=0}^{2^{j-1}-1} a_{-j+1}[n] \phi_ {-j+1,n} +
\sum_{n=0}^{2^{j-1}-1} d_{-j+1}[n] \psi_{-j+1,n}
\end{equation}
for $j > 0.$ Now, to obtain the coefficients $a_{-j+1}[n]$ and
$d_{-j+1}[n]$ from $a_{-j}[n],$ instead of using formulae \eqref{FWT},
we use the following circular convolution version of them (see
\cite[Section~7.5.1]{Mallat} or the proof of
\cite[Lemma~3.26]{Frazier}):
\begin{equation}\label{FWT_finit}
\left\{\begin{aligned}
& a_{-j+1}[p] := \sum_{n=0}^{2^{j}-1} h[n-2p] a_{-j}[n]
      \quad\text{and}\quad
      d_{-j+1}[p] := \sum_{n=0}^{2^{j}-1} g[n-2p] a_{-j}[n]; \\
& \text{with}\ g[p] = (-1)^{1-p}h[p]
\end{aligned}\right.
\end{equation}
for every $j > 0$ and $p\in\{0,1,\dots,2^{j-1}-1\}.$ Hence, with the
iterative use of \eqref{estruct_espai_finit_coeficients} and
\eqref{FWT_finit} starting with the approximation
\eqref{initial-projection} we obtain
\begin{equation}\label{final_approximation}
f \sim
  a_0 + \sum_{j=0}^{J-1} \sum_{n=0}^{2^j-1} d_{-j}[n] \psi_{-j,n}
\in \V_0^* \oplus \bigoplus_{j=0}^{J-1} \W_{-j}^*.
\end{equation}
as we wanted.

\begin{remark}
It is important to point out that the ``\emph{finiteness}'' of the FWT
turns the orthonormal basis of $\W_{-j}^*$ into an orthonormal basis of
$\SI$ for $j>0$. Therefore, we do not need anything more when we have
to deal with maps which naturally live in $\SI$.
\end{remark}

To effectively compute an approximation of the type given in
Equation~\eqref{final_approximation} one remaining problem is left:
to find a good estimate of the initial coefficients $a_{-J}[n] =
\SP{f,\phi_{-J,n}}$. In the literature there is a lot of discussion on
how to compute these coefficients, but a simple customary approach is to
use the following estimate (see, for instance,
\cite[Lemma~5.54]{Frazier} and its proof):
\begin{lemma}\label{FWT-InApprox}
Assume that $f$ verifies $|\langle f,\phi_{j,n}\rangle|<\infty$ for
every $j,n\in\Z\times\Z$ and
\[
  \abs{f(x)-f(y)} \leq C_1 \abs{x-y}^{\alpha} \text{ with
$\alpha\in(0,1]$}
\]
for all real numbers $x,y$ and a constant $C_1 < \infty$. Suppose that
the scaling function $\phi$ from an MRA $\{\V_{j}\}_{j\in\Z}$ is such
that
\[
\phi \in \mathscr{L}^{1}(\R),\
\widehat{\phi}(0) = \int_{\R}\phi(x)\ dx=1
\text{ and }
\int_{\R} \abs{x}^{\alpha} \phi(x)\ dx < C_2.
\]
Then, for every $j,n\in\Z\times\Z$,
\[
\abs{\SP{f,\phi_{j,n}} - 2^{j/2}f(2^{j} n)} <
  C_1C_2 2^{j\left(\alpha+\tfrac{1}{2}\right)}.
\]
\end{lemma}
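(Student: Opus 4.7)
The plan is to compute $\SP{f,\phi_{j,n}}$ explicitly via a change of variables, recognize $2^{j/2}f(2^j n)$ as an integral against $\phi$ using the normalization $\widehat{\phi}(0)=1$, and bound the difference pointwise using the H\"older hypothesis before integrating. The moment bound on $\phi$ will then absorb the result.

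First I would unfold the definition $\phi_{j,n}(x) = 2^{-j/2}\phi((x-2^j n)/2^j)$ and perform the substitution $u = (x-2^j n)/2^j$ to obtain
\[
\SP{f,\phi_{j,n}} = 2^{j/2}\int_{\R} f(2^j(u+n))\,\phi(u)\,du.
\]
Next I would use $\int_\R \phi(u)\,du = \widehat{\phi}(0) = 1$ to rewrite
\[
2^{j/2} f(2^j n) = 2^{j/2}\int_{\R} f(2^j n)\,\phi(u)\,du,
\]
so that the difference collapses to
\[
\SP{f,\phi_{j,n}} - 2^{j/2} f(2^j n)
= 2^{j/2}\int_{\R}\bigl[f(2^j(u+n)) - f(2^j n)\bigr]\phi(u)\,du.
\]

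Then I would apply the H\"older hypothesis pointwise, giving $|f(2^j(u+n))-f(2^j n)| \le C_1\,2^{j\alpha}|u|^\alpha$, and take absolute values inside the integral to obtain
\[
\abs{\SP{f,\phi_{j,n}} - 2^{j/2} f(2^j n)}
\le C_1\,2^{j(\alpha+\tfrac12)}\int_{\R}|u|^\alpha\,\abs{\phi(u)}\,du
< C_1 C_2\, 2^{j(\alpha+\tfrac12)},
\]
using the moment assumption $\int_\R |x|^\alpha \phi(x)\,dx < C_2$ in the last step. The condition $|\SP{f,\phi_{j,n}}|<\infty$ from the hypothesis is invoked just to guarantee that all of the quantities manipulated are finite and the splittings above are legitimate.

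There is no real obstacle in this proof; the only mild subtlety is being careful with the dilation factors $2^{-j/2}$ and $2^j$ that appear when changing variables, and with interpreting the moment hypothesis as a bound on $|\phi|$ (which is automatic if $\phi\ge 0$, and otherwise is what the inequality must really mean for the argument to conclude). The rest is routine manipulation of the integral.
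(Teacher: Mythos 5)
Your proof is correct and is essentially the standard argument that the paper itself relies on by citing \cite[Lemma~5.54]{Frazier}: change variables so that $\SP{f,\phi_{j,n}} = 2^{j/2}\int_{\R} f(2^j(u+n))\phi(u)\,du$, subtract $2^{j/2}f(2^jn)\int_{\R}\phi(u)\,du$ using $\widehat{\phi}(0)=1$, and bound via the H\"older condition and the $\alpha$-moment of $\phi$. You are also right to flag the only real subtlety, namely that the moment hypothesis must be read as a bound on $\int_{\R}|x|^{\alpha}\abs{\phi(x)}\,dx$ for the final estimate to go through.
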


As a corollary of this lemma we see that if $f$ is Lipschitz, then
\[
  a_{-J}[n] \approx 2^{-J/2}f(2^{-J}n).
\]

Summarizing, Lemma~\ref{FWT-InApprox} gives us a method to initialize
the FWT. This gives a complete algorithm to compute wavelet
coefficients of a certain function $f\in\Lii(\R)$.
\section{Defining regularity through Besov
spaces}\label{sec::onthenotion}
In this section we will make precise the notion of regularity that we
will use. To do so, we will describe, in two steps, the functional
spaces that define the notion of regularity. Roughly speaking these
spaces collect the functions that verify an $\alpha$-H\"{o}lder
condition. However, As we will see, we will have to deal with functions
with regularity zero (that do not verify any H\"{o}lder condition).
The notion of non-positive regularity is formalized trough Besov spaces
(see \cite{Tri01, BeLo}).
In what follows, recall the definition of the Besov spaces on the real
line \cite[Section~2.3]{Tri01}. Next we will recall the extension of
such definition to $\SI$.

\subsection{The spaces $\Bes(\R)$}\label{sec::Bes}
The space of all real valued rapidly decreasing infinitely
differentiable functions is called the \emph{(real) Schwartz space} and
it is denoted by $\S(\R)$. The topological dual of $\S(\R)$ is the space
of \emph{tempered distributions} which is denoted by $\S'(\R)$. For
$f\in\S'(\R)$, $\widehat{f}(\xi)$ denotes the \emph{Fourier transform}
of $f$ and $f^{\vee}(x)$ stands for the \emph{inverse Fourier transform}
in the sense of distributions (see e.g~\cite{Tri01}). Recall, also, that
the essential supremum is defined as
\[
\esssup_{x\in\R}f(x)=\inf \{a \in \mathbb{R}: \mu(\{x\in\R: f(x) > a\})
=0\},
\]
where $\mu$ is a measure (in our case the usual Lebesgue measure).

Let $\varphi_{0}\in\S(\R)$ be such that
\[
\varphi_{0}(x):=\begin{cases}
1 & \text{if $\abs{x}\leq 1$} \\
0 & \text{if $\abs{x}\geq 3/2$}
\end{cases}
\]
and set
\[
\varphi_{j}(x) := \varphi_{0}(2^{-j}x) - \varphi_{0}(2^{-j+1}x)
\]
for $j\in\N$. It is not difficult to show that, independently of the
choice of $\varphi_0$, $\sum_{j=0}^{\infty}\varphi_{j}(x) = 1$ for all
$x\in\R.$  Each of the families $\{\varphi_{j}\}_{j=0}^{\infty}$ is
called a \emph{Dyadic Resolution of Unity} in $\R$.
\begin{definition}\label{espaibes}
Let $\varphi=\{\varphi_{j}\}_{j=0}^{\infty}$ be a Dyadic Resolution of
Unity and $s\in\R$. For $f\in\S'(\R)$ we define the quasi-norm
\[
\norm{f}_{\infty,\infty,\varphi,s} =
\sup_{j\geq0} 2^{js}
   \left( \esssup_{x\in\R} \abs{(\varphi_{j}\widehat{f})^{\vee}(x)}
\right).
\]
Then, we define the \emph{Besov Spaces} by
\[
\Bes(\R) :=
  \set{f\in\S'(\R)}{\norm{f}_{\infty,\infty,\varphi,s} <\infty }.
\]
\end{definition}

As it can be seen in \cite[Remark~2 of Section~2.3]{Tri01}, the spaces
$\Bes(\R)$ are, in fact, independent of the chosen dyadic resolution of
unity $\varphi$. Therefore, we can remove the subscript $\varphi$ from
$\norm{f}_{\infty,\infty,\varphi,s}$. So, in what follows we will write
$\norm{f}_{\infty,\infty,s}$ instead of
$\norm{f}_{\infty,\infty,\varphi,s}$. The spaces $\Bes(\R)$ are a
particular case of the Generalized Besov Spaces
$\mathscr{B}_{p,q}^{s}(\R)$ defined also, for example, in \cite{Tri01}
and one has the inclusion property. That is, if $s<s',$ then
$\mathscr{B}_{p,q}^{s'}(\R) \subset \mathscr{B}_{p,q}^{s}(\R)$.

For $s > 0$, the spaces $\Bes(\R)$ coincide with the H\"older-Zygmund
spaces and it is natural to extend the notion of regularity to $s\leq 0$
through $\Bes(\R)$ in the following way (we refer to \cite{Tri01, Stein}
for a more complete explanation).

\begin{definition}\label{alphaneg}
We say that a map $f$ \emph{has regularity} $s\in\R$ if $f\in\Bes(\R)$.
\end{definition}

\begin{example}\label{alphanegex}
The following examples help to clarify this regularity notion.
\begin{enumerate}[(i)]
\item Consider the \emph{Weierstra\ss\ function} defined by
\[
\mathfrak{W}_{A,B}(x) := \sum_{n=1}^{\infty} A^{n}\sin(B^nx),
\]
where $A,B\in\R$ are such that $B^{-1} < A < 1 < B$ is H\"older
continuous and nowhere differentiable (but it has a distributional
derivative, as does every locally integrable function). Moreover, it has
regularity $-\log_{B}(A)$; that is $\mathfrak{W}_{A,B} \in
\Bes[-\log_{B}(A)](\R)$.

\item The function $f(x)=\frac{-1}{\log(|x|)}$ (with $f(0)=0$) belongs
to $\Bes[0]$ in a neighbourhood of $x=0$ since $F(x)$, where
$F'(x)=f(x)$, is Lipschitz (because is the anti-derivative of a bounded
function) and the derivative operator reduces $s$ by 1 (leaving
$p=q=\infty$ unchanged). As matter of fact, \cite[Section 2.3, Example
1]{RuSi} is devoted to give examples of such functions (in terms of
belonging to a certain functional spaces). The prototypical example is
to consider $\alpha^2+\beta^2>0$, with $\beta>0$, and defining \[
f_{\alpha,\beta}(x)=\upsilon(x)|x|^{\alpha}(-\log{|x|})^{-\beta} \]
where $\upsilon(x)$ is a smooth cut-off function with
$\supp\upsilon\subset\{x\in\R\ \colon\ |x|\leq \delta\}$ and $\delta>0$.
That is, $\upsilon(x)$ has the support near the origin and the
singularity is located near the origin.\label{ex2}

\item The negative exponents, in the Besov spaces, must be understood as
a distributional space. For example, it is known that
$\delta(x)\in\Bes[-1](\R)$ where $\delta(x)$ stands for Dirac's delta.
In view of that, $\delta(x)$ can be considered as the second
(distributional) derivative of the continuous function
\[
f(x)=\begin{cases}
      0 & \text{if $x < 0,$}\\
      x & \text{if $x \geq 0.$}\\
     \end{cases}
\]\label{ex3}
\end{enumerate}
\end{example}

We conclude this section with some remarks on the spaces $\Bes[s]$, with
$s\in[0,1)$. The fact that $f$ belongs to a concrete H\"older space is
equivalent to specify its differentiability degree and how
``\emph{wild}'' is the last derivative. Indeed, if a function $f$ is
Hölder $n+s$, where $n\in\mathbb{N}$ and $s\in(0,1)$, then it can be
approximated by a polynomial of order $n$ (the Taylor polynomial for
example) and the difference between the polynomial and the function is
uniformly bounded by an $|y|^{s}$ factor. Moreover, if $n\geq1$ then $f$
is continuously differentiable whereas, if $n=0$ then $f$ ``\emph{may be
taken}'' to be continuous (but not differentiable) in the spirit of the
following proposition.

\begin{proposition}[Proposition V.4.6 \cite{Stein}]
Every $f\in\Bes[s](\R)$, with $s\in(0,1)$ may be modified on a set of
measure zero so that it becomes continuous.
\end{proposition}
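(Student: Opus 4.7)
The plan is to exploit the Littlewood-Paley decomposition built into Definition~\ref{espaibes}. Fix a dyadic resolution of unity $\{\varphi_j\}_{j\geq 0}$ and, for $f\in\Bes[s](\R)$, set $f_j := (\varphi_j \widehat{f})^{\vee}$. The first observation is that each $f_j$ is automatically a bounded continuous function. Indeed, since $\varphi_j\in\S(\R)$ has compact support, $\varphi_j\widehat{f}$ is a compactly supported tempered distribution, so by the Paley--Wiener theorem its inverse Fourier transform is (the restriction to $\R$ of) an entire function, hence continuous; and the hypothesis $f\in\Bes[s](\R)$ supplies the bound
\[
\sup_{x\in\R}\abs{f_j(x)} \;=\; \esssup_{x\in\R}\abs{f_j(x)} \;\leq\; 2^{-js}\,\norm{f}_{\infty,\infty,s}.
\]

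Next I would examine the partial sums $S_N := \sum_{j=0}^{N} f_j$. For $M>N\geq 0$,
\[
\norm{S_M-S_N}_{\infty} \;\leq\; \sum_{j=N+1}^{M} 2^{-js}\,\norm{f}_{\infty,\infty,s},
\]
and since $s>0$ the geometric series $\sum_{j\geq 0} 2^{-js}$ converges. Thus $\{S_N\}$ is a Cauchy sequence in the Banach space of bounded continuous functions on $\R$ with the supremum norm, and therefore converges uniformly to some bounded continuous function $\widetilde{f}$.

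It remains to identify $\widetilde{f}$ with $f$ almost everywhere. Uniform convergence on $\R$ implies convergence in $\S'(\R)$, so $S_N\to\widetilde{f}$ in $\S'(\R)$. On the other hand, because $\sum_{j\geq 0}\varphi_j\equiv 1$, for any test function $\psi\in\S(\R)$ the partial sums $\sum_{j=0}^{N}\varphi_j\psi$ converge to $\psi$ in $\S(\R)$ (the error is essentially supported where $|x|\gtrsim 2^{N}$, and every Schwartz seminorm tends to zero thanks to the super-polynomial decay of $\psi$ combined with the controlled growth of $\partial^{\beta}\varphi_{j}$). Pairing with $\widehat{f}\in\S'(\R)$ then gives $\sum_{j}\varphi_j\widehat{f}\to\widehat{f}$ in $\S'(\R)$, and applying the inverse Fourier transform, which is continuous on $\S'(\R)$, yields $S_N\to f$ in $\S'(\R)$. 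By uniqueness of limits, $f=\widetilde{f}$ as distributions, and since $\widetilde{f}$ is continuous this equality holds pointwise almost everywhere; modifying $f$ on the null set $\{f\neq\widetilde{f}\}$ produces the desired continuous representative.

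The main obstacle in this plan is the Schwartz-space convergence $\sum_{j=0}^{N}\varphi_j\psi\to\psi$ in $\S(\R)$ used to transfer the identity $\sum_j\varphi_j=1$ into a distributional pairing with $\widehat{f}$. This is standard but not purely formal: every seminorm $\sup_x|x|^{\alpha}\abs{\partial^{\beta}(\psi-\sum_{j\leq N}\varphi_j\psi)}$ must be controlled using Leibniz's rule together with the scaling $\varphi_j(x)=\varphi_0(2^{-j}x)-\varphi_0(2^{-j+1}x)$ and the rapid decay of $\psi$. Everything else---the Paley-Wiener regularity of each $f_j$, the uniform summability, and the passage from distributional to pointwise equality---is then routine.
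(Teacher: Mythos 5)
Your argument is correct, and it is worth noting that the paper itself offers no proof of this statement --- it is quoted verbatim from Stein as Proposition V.4.6 --- so there is nothing internal to compare against. What you give is the standard Littlewood--Paley proof: each block $f_j=(\varphi_j\widehat{f})^{\vee}$ is smooth (Paley--Wiener, since $\varphi_j\widehat{f}$ has compact support) with $\norm{f_j}_{\infty}\leq 2^{-js}\norm{f}_{\infty,\infty,s}$, so for $s>0$ the partial sums converge uniformly to a continuous $\widetilde{f}$, and the telescoping identity $\sum_{j=0}^{N}\varphi_j(x)=\varphi_0(2^{-N}x)$ makes the $\S(\R)$-convergence you flag as the ``main obstacle'' entirely routine, since $\psi-\sum_{j\leq N}\varphi_j\psi$ is supported in $\abs{x}\geq 2^{N}$ where the rapid decay of $\psi$ kills every seminorm. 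The only implicit hypothesis is that $f$ is a priori a locally integrable function (otherwise ``modify on a null set'' is meaningless), which the statement already presumes; granted that, equality of $f$ and $\widetilde{f}$ in $\S'(\R)$ does give equality almost everywhere, and your proof in fact works for every $s>0$, not just $s\in(0,1)$.
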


Finally, we want to mention that in \cite[Section~3.8]{Cohen} it is
shown that the range of negative $s$ is the dual of the positive ones.
That is, for $s<0$ the spaces $\Bes[s](\SI)$ are ``\emph{purely}''
distributional spaces (see Example~\ref{alphanegex}~\eqref{ex3}). Thus,
we can state the following lemma.

\begin{lemma}\label{salt}
Let $f\in\Lii(\R)$ be an upper semi-continuous function which is not
continuous. Then $f\in\Bes[0](\R)$ (that is, $f$ has zero regularity).
\end{lemma}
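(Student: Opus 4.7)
The plan is to verify $f\in\Bes[0](\R)$ directly from Definition~\ref{espaibes} at $s=0$, i.e., by showing
\[
\|f\|_{\infty,\infty,0}=\sup_{j\ge 0}\esssup_{x\in\R}\bigl|(\varphi_j\widehat f)^\vee(x)\bigr|<\infty.
\]
The route is to recognize each Littlewood--Paley piece $\Delta_j f:=(\varphi_j\widehat f)^\vee$ as a convolution with a dyadic kernel $\varphi_j^\vee$ of uniformly bounded $\Li$ norm, and then apply Young's inequality. For this to close, I would read the lemma with the effective hypothesis $f\in\Liin(\R)$: upper semi-continuity guarantees that $f$ attains its supremum on every compact set, and in the intended application (an SNA attractor confined to a compact invariant strip of $\SI\times\R$) the invariant function is in fact essentially bounded.

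With $f\in\Liin(\R)$ in hand I would set $\tilde\varphi(\xi):=\varphi_0(\xi)-\varphi_0(2\xi)\in\S(\R)$, so that $\varphi_j(\xi)=\tilde\varphi(2^{-j}\xi)$ for $j\ge 1$. The scaling identity $\varphi_j^\vee(x)=2^j\tilde\varphi^\vee(2^j x)$ and a change of variables give
\[
\|\varphi_j^\vee\|_{\Li(\R)}=\|\tilde\varphi^\vee\|_{\Li(\R)},
\]
which is finite and independent of $j$ since $\tilde\varphi^\vee\in\S(\R)\subset\Li(\R)$; the $j=0$ case is immediate because $\varphi_0^\vee\in\S(\R)$. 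Using the Fourier convention of the paper one checks by a standard computation that $(\varphi_j\widehat f)^\vee=\varphi_j^\vee*f$, so Young's inequality yields
\[
\|\Delta_j f\|_{\Liin(\R)}\le\|\varphi_j^\vee\|_{\Li(\R)}\,\|f\|_{\Liin(\R)}\le C\,\|f\|_{\Liin(\R)},
\]
uniformly in $j\ge 0$, which is precisely $\|f\|_{\infty,\infty,0}<\infty$.

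The main obstacle is the reduction to $\Liin(\R)$: upper semi-continuity alone does not yield global boundedness on $\R$, and the $\Lii(\R)$ hypothesis cannot bridge the gap (an $\Lii$ function with localized algebraic spikes can fail to be in $\Bes[0]$). The argument therefore leans on the boundedness coming from the dynamical setting; a stand-alone statement would instead need $f\in\Liin(\R)$ as a hypothesis, or a localization to a compact interval using the USC control. The hypothesis ``not continuous'' plays no role in the estimate itself---it is thematic, signalling that no higher regularity is on offer, in view of the Stein proposition quoted above, which would force a continuous representative for any $f\in\Bes[s]$ with $s\in(0,1)$.
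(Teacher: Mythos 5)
Your argument is correct but follows a genuinely different route from the paper's. The paper's proof is a soft squeeze that never touches the quasi-norm of Definition~\ref{espaibes}: discontinuity rules out every $s$-H\"older condition, so the regularity is $\le 0$; and ``$f$ is not a distribution'' is invoked to rule out $s<0$; hence $f\in\Bes[0](\R)$. You instead verify membership in $\Bes[0](\R)$ directly, writing each block as $\varphi_j^\vee * f$ and using Young's inequality with $\norm{\varphi_j^\vee}_{\Li(\R)}=\norm{\tilde\varphi^\vee}_{\Li(\R)}$ uniform in $j$ by scaling; this is a complete and standard proof that $\Liin(\R)\subset\Bes[0](\R)$. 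What your route buys is rigor exactly where the paper is weakest: the claim that a non-distributional $\Lii$ function cannot have negative regularity is not an argument, and as you observe the lemma is literally false under the stated hypotheses --- $\Lii(\R)$ only embeds into $\Bes[-1/2](\R)$, and an integrable upper semi-continuous function with an algebraic spike such as $-\abs{x}^{-1/4}$ near the origin lies outside $\Bes[0](\R)$. Your repair, reading the hypothesis as $f\in\Liin(\R)$, is the right one and is harmless in the application, where $0\le\varphi\le 2\sigma\max_\theta g_\varepsilon(\theta)$. What the paper's route supplies, and your estimate alone does not, is the upper bound on regularity: since $\Bes[s'](\R)\subset\Bes[0](\R)$ for $s'>0$, bare membership in $\Bes[0]$ does not yet say the regularity \emph{is} zero, and it is precisely the discontinuity-versus-H\"older observation (equivalently the quoted Stein proposition, as you note) that excludes $f$ from every $\Bes[s](\R)$ with $s\in(0,1)$. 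You relegate this to a closing aside; it should be promoted to the second half of the proof, since it is the half the paper actually carries out.
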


\begin{proof}
If $f$ is an upper semi-continuous function which is not continuous
then, it cannot verify an $s$-H\"{o}lder condition (for any $s>0$). 
Thus,
the regularity $s$ is non-positive. On the other side, since $f$ is
not a distribution, the parameter $s$ cannot be negative.
Hence, $f\in\Bes[0](\R)$.
\end{proof}

\subsection{The Besov spaces on $\SI$}
In this section we will extend the Besov spaces to $\SI$. Recall that we
consider $\SI=\R\setminus\Z$ and, hence, as the interval $[0,1)$. To do
it we follow \cite{BeLo,Tri03}. Indeed, given $f\in\S'(\SI)$ (the space
of tempered distributions on $\SI$) it is known that
\[
f=\sum_{n\in\Z}\widehat{f}(n)e^{inx}.
\]
\begin{definition}\label{alphanegtor}
Let $\varphi = \{\varphi_{j}\}_{j=0}^{\infty}$ be a dyadic resolution
of unity (on $\R$). We define the \emph{Besov Spaces} on $\SI$ by
\[
\Bes(\SI) := \set{f\in\S'(\SI)}{\norm{f}_{\infty,\infty,s}<\infty}
\]
where
\[
\norm{f}_{\infty,\infty,s} =
  \sup_{j\geq0} 2^{js}\left( \esssup_{x\in\R} \abs{\sum_{n\in\Z}
\varphi_{j}(n)\widehat{f}(n)e^{inx}} \right)
\]
is a quasi-norm for the quasi-Banach space $\Bes(\SI)$.
\end{definition}
As in Definition~\ref{alphaneg} we say that a circle map $f$ \emph{has
regularity} $s\in\R$ if the map $f$ belongs to $\Bes(\SI)$.

The following lemma shows that the regularity of a circle map coincides
with the regularity of its real extension which we define as follows.
Given $f\in\S'(\SI)$ there exists a unique $\PER{f} \in \S'(\R)$ such
that $\PER{f}$ is 1-periodic and the restriction of $\PER{f}$ over
$[0,1)$ coincides with $f$ (such an $\PER{f}$ can be defined as
$f(\{\cdot\})$, where $\{\cdot\}$ denotes the fractional part function).
This lemma is usually omitted and used implicitly but we include here
for completeness.
\begin{lemma}\label{PER}
For every $f\in\S'(\SI)$ it follows that $\PER{f}\in\Bes(\R)$ if and
only if $f\in\Bes(\SI)$.
\end{lemma}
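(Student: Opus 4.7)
The plan is to show that the two quasi-norms $\|\PER{f}\|_{\infty,\infty,s}$ and $\|f\|_{\infty,\infty,s}$ are literally equal (not merely equivalent), which immediately gives both implications. The bridge is the distributional Fourier transform of a periodic tempered distribution.

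First I would compute $\widehat{\PER{f}}$ in $\S'(\R)$. Since $\PER{f}$ is $1$-periodic and its restriction to a fundamental domain admits the expansion $f=\sum_{n\in\Z}\widehat{f}(n)e^{inx}$, a standard computation (Poisson summation applied to $\widehat{\PER{f}}$ tested against Schwartz functions) yields
\[
\widehat{\PER{f}} = 2\pi \sum_{n\in\Z} \widehat{f}(n)\, \delta_{n},
\]
where $\delta_{n}$ is the Dirac delta at $n$ and the equality is understood in $\S'(\R)$. The pairing $\langle \widehat{\PER{f}}, \varphi\rangle$ with any $\varphi\in\S(\R)$ reduces by the Fourier inversion on the periodic side to $2\pi\sum_n \widehat{f}(n)\varphi(n)$, justifying the formula.

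Next I would exploit the compact support of $\varphi_j$. Since $\supp\varphi_0\subset[-3/2,3/2]$ and $\varphi_j(\xi)=\varphi_0(2^{-j}\xi)-\varphi_0(2^{-j+1}\xi)$, only finitely many integers $n$ lie in $\supp\varphi_j$, so the product
\[
\varphi_j\,\widehat{\PER{f}} = 2\pi \sum_{n\in\Z} \varphi_j(n)\widehat{f}(n)\,\delta_n
\]
is a \emph{finite} linear combination of Dirac deltas in $\S'(\R)$. Taking inverse Fourier transforms (which for $\delta_n$ gives $\tfrac{1}{2\pi}e^{inx}$) one obtains the pointwise identity
\[
(\varphi_j\,\widehat{\PER{f}})^{\vee}(x) = \sum_{n\in\Z} \varphi_j(n)\widehat{f}(n)\, e^{inx}
\]
as a bona fide smooth function on $\R$. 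The right-hand side is exactly the expression occurring in Definition~\ref{alphanegtor}, and being $1$-periodic its essential supremum over $\R$ equals its essential supremum over any period. Therefore, term-by-term in $j$,
\[
\esssup_{x\in\R}\bigl|(\varphi_j\,\widehat{\PER{f}})^{\vee}(x)\bigr|
= \esssup_{x\in\R}\Bigl|\sum_{n\in\Z} \varphi_j(n)\widehat{f}(n)\,e^{inx}\Bigr|.
\]
Multiplying by $2^{js}$ and taking the supremum over $j\ge 0$ yields $\|\PER{f}\|_{\infty,\infty,s}=\|f\|_{\infty,\infty,s}$, so one side is finite iff the other is.

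The main obstacle is really only bookkeeping: one must (i) fix the Fourier convention (factors of $2\pi$) consistently between the definition on $\R$ in Section~\ref{sec::Bes} and the Fourier-series definition on $\SI$ in Definition~\ref{alphanegtor}, and (ii) justify the distributional identity for $\widehat{\PER{f}}$. Once the sum defining $\varphi_j\widehat{\PER{f}}$ is recognised as finite, no convergence issues remain and the equality of norms is immediate.
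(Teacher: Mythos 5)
Your proof is correct and follows essentially the same route as the paper's: both arguments identify the Fourier data of $\PER{f}$ with the Fourier coefficients $\widehat{f}(n)$ of $f$ and conclude that $\norm{\PER{f}}_{\infty,\infty,s}=\norm{f}_{\infty,\infty,s}$ exactly. Your version is in fact more careful than the paper's, since you explicitly justify the distributional identity $\widehat{\PER{f}}=2\pi\sum_{n}\widehat{f}(n)\delta_{n}$ and observe that the compact support of $\varphi_{j}$ makes $\varphi_{j}\widehat{\PER{f}}$ a finite sum, steps the paper leaves implicit.
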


\begin{proof}
Since $\PER{f}$ is 1-periodic and $\PER{f}\evalat{[0,1]} = f$,
\[
\widehat{\PER{f}}(n) =
 \int_{0}^{1} \PER{f}(x) e^{-2\pi inx}\ dx =
 \int_{0}^{1} f(x) e^{-2\pi inx}\ dx =
 \widehat{f}(n).
\]
Hence,
\begin{align*}
\esssup_{x\in\R} \abs{\sum_{n\in\Z} \varphi_{j}(n) \widehat{f}(n)
e^{inx}}
&= \esssup_{x\in\R} \abs{\sum_{n\in\Z} \varphi_{j}(n)
\widehat{\PER{f}}(n) e^{ inx}} \\
&= \esssup_{x\in\R} \abs{\sum_{n\in\Z} (\varphi_{j}\widehat{\PER{f}})(n)
e^{inx}}\\
&= \esssup_{x\in\R} \abs{(\varphi_{j}\widehat{\PER{f}})^{\vee}(x)}.
\end{align*}
That is,
$\norm{\PER{f}}_{\infty,\infty,s} = \norm{f}_{\infty,\infty,s}$
and, hence,
$\PER{f}\in\Bes(\R)$ if and only if $f\in\Bes(\SI)$.
\end{proof}
\section{Wavelets and regularity}\label{sec::WaveletsRegularity}

In Section~\ref{sec::onthenotion} we have recalled the notion of the
regularity of a function through the spaces $\Bes(\R)$ and $\Bes(\SI)$.
Also, we have introduced the wavelet expansions of a given function in
$\Lii(\R)$. Next, we want to show the relationship between this notion
of regularity and the wavelet coefficients. Such relationship will be
the main tool of the forthcoming Algorithm~\ref{algfinal}. The main tool
for this will be the Daubechies wavelets, because they are orthonormal
bases on $\Lii(\R)$ (see \cite{Mallat} for a definition and
construction) and, depending on the number of vanishing moments, they
are well adapted to the functional spaces $\Bes(\R)$ (see
\cite{HeWe,Tri02}).

\begin{definition}\label{def15}
Let $\psi(x)$ be a wavelet from a MRA $\{\V_{j}\}_{j\in\Z}$. We say that
$\psi$ has $p$-\emph{vanishing moments} if the integer $p$ is the
minimum non-negative integer such that
\[
 \int_{\R} x^{k} \psi(x)\ dx=0 \text{ for $0\leq k<p$.}
\]
\end{definition}

Daubechies wavelets are a family of wavelets with compact support that
have an element with $p$ vanishing moments for each $p \ge 1$.
From \cite[Theorem~7.16]{HeWe},~\cite[Theorem~3.8.1]{Cohen} and
\cite[Theorem~1.64]{Tri02} we will state the following theorem, in the
spirit of~\cite[Theorem~5.10]{LlaPe} and \cite[Chapter~3]{Meyer}, which
will be useful for our purposes.
To this end, for $t\in \R$ we set
\[
\Reg(t) =
\begin{cases}
t - \tfrac{1}{2} & \text{if $t > \tfrac{1}{2},$}\\
t + \tfrac{1}{2} & \text{if $t < -\tfrac{1}{2}$}\\
0                & \text{if $t \in \left[-\tfrac{1}{2}, 
\tfrac{1}{2}\right]$.}
\end{cases}
\]

\begin{theorem}\label{util}
Let $f \in \Lii(\R)$ and let $\psi$ be a mother Daubechies wavelet with
more than $\max(\Reg(\tau),\tfrac{5}{2} - \Reg(\tau))$ vanishing moments
for some $\tau \in \R\setminus\left[-\tfrac{1}{2}, \tfrac{1}{2}\right].$
Then, $f\in\Bes[\Reg(\tau)](\R)$ if and only if there exists $C > 0$ 
such that
\[
\sup_{n\in\Z} \abs{\SP{f,\psi_{j,n}}} \leq C 2^{\tau j}
\]
for all $j\leq 0$.
Furthermore, if $\psi$ has more than 2 vanishing moments, then
$f\in\Bes[0](\R)$ if and only if either the sequence
$
\left\{2^{-\tau j} 
\sup_{n\in\Z}\abs{\SP{f,\psi_{j,n}}}\right\}_{j=0}^{-\infty}
$
is unbounded for every $\tau \in \R$ or there exist
$C > 0$ and $\tau \in \left[-\tfrac{1}{2}, \tfrac{1}{2}\right]$ such 
that
\[
\sup_{n\in\Z} \abs{\SP{f,\psi_{j,n}}} \leq C 2^{\tau j}
\]
for $j \le 0$
\end{theorem}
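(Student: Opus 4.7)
The plan is to derive Theorem~\ref{util} as a reformulation of the standard wavelet characterization of Besov spaces given in \cite[Theorem~1.64]{Tri02}, \cite[Theorem~3.8.1]{Cohen} and \cite[Theorem~7.16]{HeWe}, packaged in a single parameter $\tau$ via the piecewise-linear function $\Reg$.

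First I would invoke the cited characterizations: for a Daubechies mother wavelet $\psi$ with sufficiently many vanishing moments (so that, by Daubechies' sharp estimate, $\psi$ is simultaneously smooth enough to pair with distributions in $\Bes[s](\R)$ and has enough cancellation to detect membership from coefficient sizes), each of the three references provides an equivalence of the form
\[
f \in \Bes[s](\R) \iff \sup_{n \in \Z} \abs{\SP{f,\psi_{j,n}}} \leq C\, 2^{\tau j}\quad \text{for all } j \leq 0,
\]
with $\tau$ depending affinely on $s$ through the paper's $\Lii$ normalization $\psi_{j,n}(x) = 2^{-j/2}\psi(2^{-j}x - n)$. Comparing this dependence with the piecewise-linear definition of $\Reg$, one checks that the relation reads $s = \Reg(\tau)$ on the two open rays $\tau > \tfrac{1}{2}$ (yielding $s > 0$) and $\tau < -\tfrac{1}{2}$ (yielding $s < 0$), which is precisely the first assertion. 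The combined threshold $\max(\Reg(\tau), \tfrac{5}{2} - \Reg(\tau))$ collects two distinct requirements: the first term controls the wavelet smoothness needed for the ``only if'' direction, while the second, via Daubechies' proportionality between the number of vanishing moments and the H\"older regularity of the resulting scaling function, controls the cancellation needed for the ``if'' direction.

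For the second assertion, observe that $\Reg^{-1}(0) = [-\tfrac{1}{2}, \tfrac{1}{2}]$, and that the decay condition is monotone in $\tau$: for $j \leq 0$ a larger $\tau$ produces a tighter bound, so whenever the decay holds for some $\tau$ it also holds for every $\tau' \leq \tau$. The forward direction then splits naturally: if $f \in \Bes[0](\R)$ and the decay holds for some $\tau > \tfrac{1}{2}$, the first assertion upgrades $f$ to $\Bes[\Reg(\tau)](\R) \subset \Bes[0](\R)$ and the weaker bound at $\tau = \tfrac{1}{2}$ places us in alternative (b); otherwise, either some $\tau \in [-\tfrac{1}{2}, \tfrac{1}{2}]$ still works, which is alternative (b) directly, or no polynomial rate works at all, which is alternative (a). The converse direction uses the first assertion in the negative sense: in case (a) the failure of the bound for every $\tau \notin [-\tfrac{1}{2}, \tfrac{1}{2}]$ rules out $f \in \Bes[\Reg(\tau)](\R)$ for all $\Reg(\tau) \neq 0$, and on an $\Lii$ function this forces $f \in \Bes[0](\R)$; in case (b) the first assertion, extended by continuity across the endpoints $\tau = \pm\tfrac{1}{2}$, gives $f \in \Bes[\Reg(\tau)](\R) = \Bes[0](\R)$. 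Specializing $\max(\Reg(\tau), \tfrac{5}{2} - \Reg(\tau))$ at $\Reg(\tau) = 0$ evaluates to $\tfrac{5}{2}$, which explains the ``more than $2$'' vanishing-moments condition stated for the $\Bes[0]$ case.

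The main obstacle is the bookkeeping across the three cited references, which employ different conventions for the Fourier transform, for the direction of the scaling index $j$, and for the precise form of the Besov quasi-norm; each convention produces shifts in $\tau$ that must be reconciled with the convention fixed here. A secondary subtlety is tracking the Daubechies constants carefully enough to verify that the second slot of the max indeed takes the form $\tfrac{5}{2} - \Reg(\tau)$, and that at $\Reg(\tau) = 0$ it cleanly reduces to the ``more than $2$'' vanishing-moments requirement.
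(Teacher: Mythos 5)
The paper itself offers no proof of Theorem~\ref{util}: it is stated as assembled from \cite[Theorem~7.16]{HeWe}, \cite[Theorem~3.8.1]{Cohen} and \cite[Theorem~1.64]{Tri02}, so your overall strategy --- reduce to those characterizations and translate conventions --- is the same one the paper implicitly relies on. The difficulty is that the step you defer as ``bookkeeping'' is where all the content sits, and your proposal does not close it. With the paper's $\Lii$ normalization $\psi_{j,n}(x)=2^{-j/2}\psi(2^{-j}x-n)$ (fine scales being $j\to-\infty$), each of the cited characterizations of $\Bes[s](\R)$ takes the form $\sup_{n}\abs{\SP{f,\psi_{j,n}}}\leq C\,2^{(s+1/2)j}$ for $j\leq 0$, i.e.\ a \emph{single} affine relation $\tau=s+\tfrac{1}{2}$ valid for all $s$ under one fixed normalization. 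The function $\Reg$ instead imposes $s=\tau-\tfrac{1}{2}$ on the ray $\tau>\tfrac{1}{2}$ but $s=\tau+\tfrac{1}{2}$ on the ray $\tau<-\tfrac{1}{2}$; these branches differ by $1$ and cannot both be read off from any one of the cited theorems under any one convention. So ``one checks that the relation reads $s=\Reg(\tau)$ on the two open rays'' is not a verification you can actually perform: at most one branch matches, and justifying the other branch (and the plateau, and the $\tfrac{5}{2}-\Reg(\tau)$ threshold) is the actual proof, which is absent from the proposal.

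The treatment of the $\Bes[0]$ case has independent problems. An if-and-only-if characterization cannot be ``extended by continuity across the endpoints $\tau=\pm\tfrac{1}{2}$''; that is not an argument. The claim that, for $f\in\Lii(\R)$, failure of $f\in\Bes[s](\R)$ for every $s\neq 0$ forces $f\in\Bes[0](\R)$ is false: $\Lii(\R)$ is not contained in $\Bes[0](\R)$ (a locally unbounded square-integrable function such as a cut-off of $\abs{x}^{-1/4}$ fails the $\Bes[0]$ quasi-norm bound), and the hypothesis is in any case vacuous because $\Lii(\R)$ embeds into $\Bes[-1/2](\R)$, so no $\Lii$ function lies outside all $\Bes[s](\R)$ with $s<0$. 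For the same reason, for $f\in\Lii(\R)$ and $\tau\leq 0$ one has $2^{-\tau j}\sup_n\abs{\SP{f,\psi_{j,n}}}\leq\norm{f}_{\Lii(\R)}$ for all $j\leq 0$, so alternative (a) of the dichotomy is empty for the class of functions considered and your case analysis around it does not engage with what that alternative is meant to capture. In short: same route as the paper (citation plus translation of conventions), but the translation --- the only nontrivial step --- is asserted rather than carried out, and the supplementary arguments offered for the zero-regularity case are invalid.
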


\begin{remark}\label{LinearRegression}
In view of Theorem~\ref{util}, if the coefficients $\sup_{n\in\Z}
\abs{\SP{f,\psi_{j,n}}}$ decay approximately exponentially with respect
to $j$, that
is
\begin{equation}\label{LineqReg}
s_j := \log_{2} \left( \sup_{n\in\Z}
\abs{\SP{f,\psi_{j,n}}}
\right) \approx
   \tau  j + \log_{2}(C),
\end{equation}
then $f\in\Bes[\Reg(\tau)](\R)$. This tells us that, in this case, to
compute the value of regularity $s$ we can make a linear regression to
estimate the slope $\tau$ of the graph of the pairs $(j,s_j)$ and get 
the
correct value of $s$ from this slope. The Pearson correlation
coefficient controls the degree of linear correlation between the
variables $j$ and $s_j$.

Moreover, in Theorem~\ref{util} the constant $C>0$ must be understood as
the norm of $f$. Therefore, Equation~\eqref{LineqReg}, can be used to 
see
the ``\emph{jumps}'' between different Besov spaces. In other words, the
``\emph{pass}'' from a concrete Besov space to another one can be
recovered by the inspection of the values of $C$. This is because such
$C>0$ should be unbounded if, for a concrete $s$, $f\not\in\Bes[s]$.
\end{remark}

In view of the above Remark, we can perform a strategy to estimate the
regularity of a function using the wavelet coefficients. This is the
main topic of the following subsection.

\subsection{A method to estimate regularities on
$\Liin$}\label{sec::TheAlgorithm}
As we have said, we want to compute wavelet approximations of certain
dynamical objects while controlling the precision of these 
approximations
(in fact, the quality of the computed wavelet coefficients).
This quality test  will be done by comparing the theoretical regularity
of such functions with the estimated one
(given by Theorem~\ref{util} and Remark~\ref{LinearRegression}). More
concretely, in \cite{LlaPe}, numerical implementations of wavelet
analysis to estimate the ``\emph{positive}'' regularity of conjugacies
between critical circle maps are done. Due to Theorem~\ref{util}, we can
generalize such techniques to any  value (positive or not) of the
regularity measured in terms of the Besov Spaces $\Bes(\R)$. The steps
described below explain how to apply Theorem~\ref{util} in a general
framework.

Among the many methods described in the literature to compute
wavelet approximations, in this paper we will use (and test)
the Fast Wavelet Transform. Alternatively, in a forthcoming paper we 
will explore
the technique of solving numerically the Invariance Equation given in
Remark~\ref{remalg} which can be more adapted to the dynamical 
complexity
of the object.

\begin{remark}\label{pascual}
In view of Lemma~\ref{PER}, to estimate the regularity of a map $f
\in \S'(\SI)$ it is enough to use Theorem~\ref{util} for $\PER{f}$.
Moreover, if $f\in\Liin(\SI),$ then
$
 |\langle f,\psi_{j,n}\rangle|<\infty
$
for all $j,n\in\Z\times\Z$.
\end{remark}

In view of this remark, the verbatim application of Theorem~\ref{util} 
is the following:

\step{1} Use Lemma~\ref{FWT-InApprox} to compute $\PER{a}_{-J}[n]:=
\SP{\PER{f},\phi_{-J,n}}$ for $0\leq n\leq 2^{J}-1$.

\step{2} Use Equation~\eqref{FWT} to calculate the coefficients
$
\PER{d}_{-j}[n] = \SP{\PER{f},\psi_{-j,n}}
$
for $j=0,\dots,J-1$ and $0 \leq n \leq 2^{j}-1$.

\step{3} By using the coefficients $\PER{d}_{-j}[n]$ from Step~2,
calculate, in view of Theorem~\ref{util},
\[
s_{-j} = \log_{2}\left(
   \sup_{0\leq n \leq2^{j}-1} \abs{\PER{d}_{-j}[n]}
\right)
\]
for $j=0,\dots,J-1$.

\step{4} In view of Equation~\eqref{LineqReg}, make a linear regression
to estimate the slope $\tau$ of the graph of the pairs $(-j,s_{-j})$
with $j=0,\dots,J-1$. Then, when there is evidence of linear correlation
between the variables $-j$ and $s_{-j}$, we set $s = \Reg(\tau).$

\step{5} If $k > \max(s,5/2-s)$ then, by Theorem~\ref{util},
$f\in\Bes(\R)$ and, hence, $f$ has regularity $s$. Otherwise we need to
repeat Step 1 -- 4 with a Daubechies wavelet having a larger value of
$k$ until $k > \max(s,5/2-s).$

To test the precision of this implementation of Theorem~\ref{util}
we will use the Weierstra\ss{} function.
The reason for such experiment is twofold.
From one side there exists an analytic
formula for the regularity of the Weierstra\ss{} function in terms
of its parameters (which allows us to test the quality of the computed
coefficients) and, at the same time,
the graph of the Weierstra\ss{} function is ``\emph{strange}'' enough.
This idea is borrowed from \cite{LlaPe}, but since we use
more data than in~\cite{LlaPe} we reproduce the example.

\begin{example}\label{exemnumWeier}
From Section~\ref{sec::onthenotion} we know that
$\mathfrak{W}_{A,B}\in\Bes[-\log_{B}(A)](\R).$
To test the algorithm we fix the parameter $B=2$ and we take
$A \in [0.56745,0.86475]$.
Hence, $\mathfrak{W}_{A,2} \in \Bes(\R)$ with
$s = -\log_{2}(A) \in [0.2051\dots, 0.8174\dots]$.
Then, observe that
\[
1 < \max\left(s,\dfrac{5}{2} - s\right) = \dfrac{5}{2} - s < 3.
\]
Therefore the above algorithm is valid in this case only for
Daubechies wavelets with $k \ge 3$ vanishing moments.

To perform the above algorithm we take $J=30$ (that is, we use a
sample of the graph of $\mathfrak{W}_{A,2}$ of $2^{30}$ points). To
carry out Step~1, by Lemma~\ref{FWT-InApprox}, we can estimate
\[
a_{-J}[n] \approx 2^{-J/2} \mathfrak{W}_{A,2}(2^{-J}n).
\]
Then, after executing Steps~1--4 of the above algorithm we obtain the
results depicted in Figure~\ref{taulaweie}.
\begin {figure}[htbp]
\begin{center}
\begin{picture}(165,125)%
\put(5,5){\includegraphics[scale=0.2]{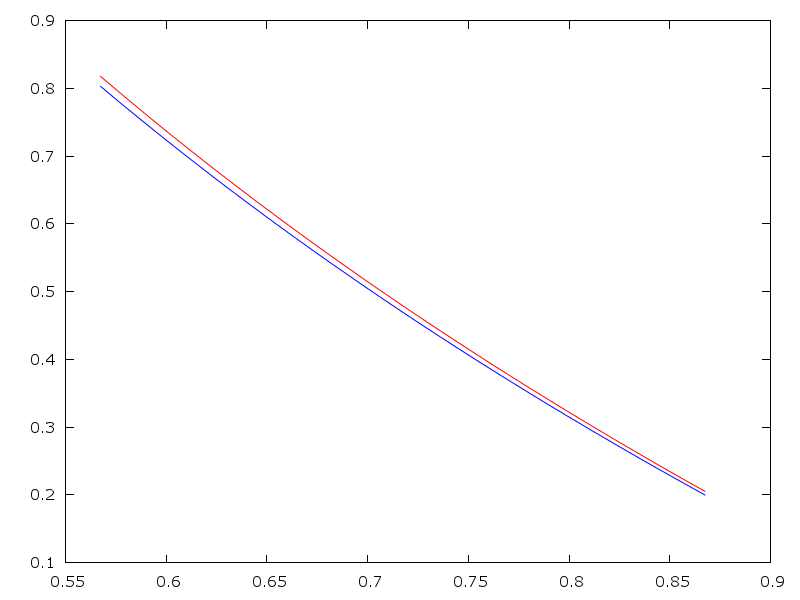}}%
\put(0,70){\rotatebox{90}{\makebox(0,0){\tiny Regularity}}}%
\put(90,0){\makebox(0,0){\tiny $A$}}%
\end{picture}
\quad
\begin{picture}(165,125)%
\put(5,5){\includegraphics[scale=0.2]{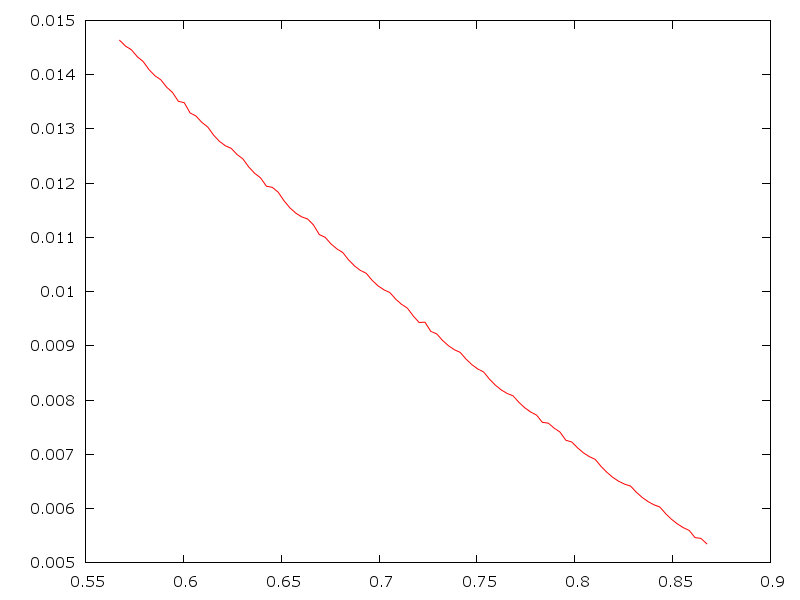}}%
\put(0,70){\rotatebox{90}{\makebox(0,0){\tiny $\bigl|-\log_{2}
(A)-s_A\bigr|$}}}%
\put(90,0){\makebox(0,0){\tiny $A$}}%
\end{picture}%
\caption{On the left picture the theoretical and estimated regularity
of $\mathfrak{W}_{A,2}$ with $A \in [0.56\cdots,0.86\cdots]$ are shown.
The theoretical curve is plotted in \textcolor{blue}{blue} and the
\emph{numerical} one in \textcolor{red}{red}. The estimated regularity
is computed with a Daubechies wavelet with 10 vanishing moments. On the
right picture the \emph{Error} function $|-\log_{2}(A)- s_A|$ is
plotted (here $s_A$ denotes the estimated regularity of
$\mathfrak{W}_{A,2}$). Notice that the error is decreasing as the
regularity gets closer to zero.}\label{taulaweie}
\end{center}
\end {figure}
We want to remark that the best numerical estimate of the regularity of
$\mathfrak{W}_{A,2}(x)$ with $A \in [0.56\cdots,0.86\cdots]$ computed
with a Daubechies wavelet of 10 vanishing moments is obtained for
$A=0.86\cdots$ (that is, when the regularity is closer to zero). The
fact that we have to work with the Daubechies wavelet of 10 vanishing
moments can be explained as follows. Daubechies wavelets with higher
vanishing moments have bigger domain and regularity (see \cite{Mallat})
and, hence, they are less adapted to approximate the Weierstra\ss{}
function, which has highly concentrated oscillations. It turns out that
the value of 10 vanishing moments is the best adapted (in the sense that
minimizes the error) to the Weierstra\ss{} function for the range of
parameters considered.

We also want to remark that all the computed Pearson correlation
coefficients are bigger than 0.999. This agrees with the fact that the
Weierstra\ss{} function is self-similar. Then, the coefficients
$d_{j}[n]$, (as pointed out in Remark~\ref{LinearRegression}) must be
approximately on a straight line. This is what
Figure~\ref{regressioWeie} shows for a particular case.

\begin {figure}[htbp]
\begin{center}
\includegraphics[width=0.7\textwidth]{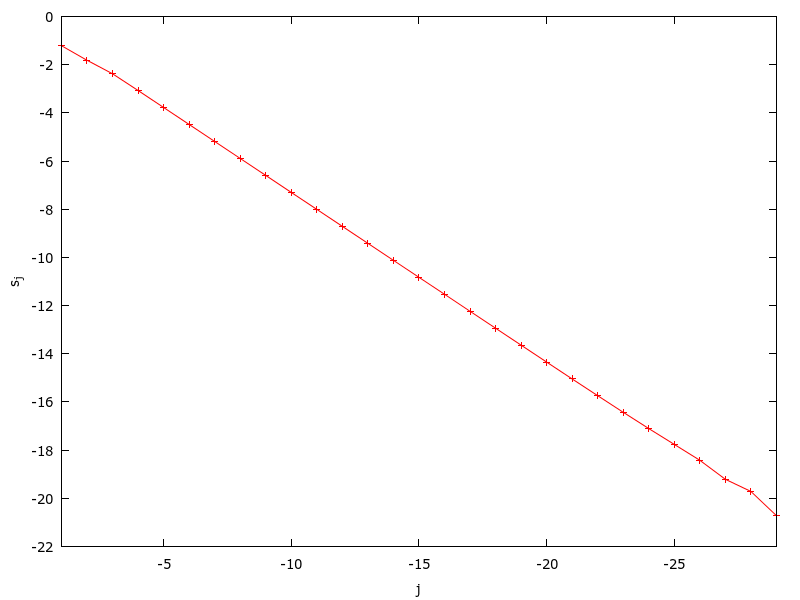}
\caption{The graph of the pairs $(j,s_j)$ with $-29 \le j \le 0$ for
$\mathfrak{W}_{0.86745,2}$ (the regularity
is $0.205147$).}\label{regressioWeie}
\end{center}
\end {figure}

It turns out that the Daubechies wavelet with $10$ vanishing moments
also maximizes globally the computed Pearson correlation coefficients in
the range of parameters that we consider. Hence, despite of the
inherited error of the FWT's seed, Daubechies wavelet with 10 vanishing
moments is the best to approximate and to explain the regularity of the
Weierstra\ss{} function. The error of such estimates are represented
on the left hand side of Figure~\ref{taulaweie}.
\end{example}
In view of the good results obtained in Example~\ref{exemnumWeier}, we
will perform the same methodology to a family of a Strange
Non-Chaotic Attractors in the following sections.
\section{An upper semi continuous \textsf{SNA}}\label{sec::Statement}
In \cite{GOPY}, a quasi-periodically forced skew product on the cylinder
was studied. The attractor obtained there (as shown by Keller
in \cite{Kell}), is the graph of an upper semi-continuous function.
Precisely, these kind of systems will be our testing grounds for the
algorithms that we are going to develop.

We start by introducing the model (and the attractor) that we are going
to study following~\cite{Kell}. We consider skew products on the
Cartesian product of the circle $\SI=\R\setminus\Z$ and
$\R^{+}=[0,\infty)$ of the type
\begin{equation}\label{equkelle}
\begin{pmatrix} \theta_{k+1} \\ x_{k+1}\end{pmatrix} =
\Fse (\theta_{k},x_{k}) =
\begin{pmatrix}
R_{\omega}(\theta_k) \\ f_\sigma(x_k)g_{\varepsilon}(\theta_k)
\end{pmatrix},
\end{equation}
where $(\theta_k,x_k) \in \SI\times\R^+$,
$R_{\omega}(\theta_k)=\theta_k+\omega\ \pmod{1}$ and $\omega \in
\R\setminus\mathbb{Q}$. On the second component, the map
{\map{g_{\varepsilon}}{\SI}[{[0,\infty)}]} is continuous (hence
bounded---for example $(\varepsilon+\abs{\cos(2\pi\theta)})$) and the
map {\map{f_\sigma}{[0,\infty)}} is $\Ccla{1}$, bounded, increasing,
strictly concave and such that $f_\sigma(0)=0$ (e.g.
$2\sigma\tanh(x)\evalat{\R^{+}}$). Observe that, since $f_\sigma(0)=0$,
the circle $x\equiv0$ is invariant.

Recall that the vertical Lyapunov Exponent at a point $(\theta_0,x_0)$
is defined by
\[
\limsup_{k\to\infty} \frac{1}{k}
  \log \norm{
     \begin{pmatrix}
        1 & 0 \\
        \dfrac{\partial x_k}{\partial \theta} & \dfrac{\partial
x_k}{\partial x}
     \end{pmatrix} \begin{pmatrix} 0 \\ 1 \end{pmatrix} } =
\limsup_{k\to\infty} \frac{1}{k} \log\abs{\dfrac{\partial x_k}{\partial
x}}.
\]
Therefore, by using Birkhoff's Ergodic Theorem, it can be shown that the
vertical Lyapunov Exponent at $x\equiv0$ is
\[
\kappa(f_\sigma,g_\varepsilon) :=
\int_{\SI} \log\left|
   \frac{\partial f_\sigma(x)g_{\varepsilon}(\theta)}{\partial
x}\biggr\rvert_{x=0}
\right| d\theta =
\log(f_{\sigma}'(0)) + \int_{\SI} \log \abs{g_{\varepsilon}(\theta)}
d\theta.
\]
When $\kappa(f_\sigma,g_\varepsilon)$ is positive, $x\equiv0$ is a
repellor for
System~\eqref{equkelle}. Moreover, since $f_{\sigma}$ and
$g_{\varepsilon}$ are bounded, infinity is also a repellor and the
system must have an attractor different from $x\equiv0$. These
attractors, which are the objects that we want to study, are typically
but not generally very complicate.

As we will see later, we are going to restrict ourselves to the study of
a particular subfamily of Model~\eqref{equkelle} which is
\begin{equation}\label{keller-GOPYe}
\begin{pmatrix} \theta_{k+1}\\x_{k+1}\end{pmatrix} =
\Fse (\theta_{k},x_{k}) =
\begin{pmatrix}
   R_{\omega}(\theta_k)\\
   2\sigma\tanh(x_k)\cdot(\varepsilon + \abs{\cos(2\pi\theta_k)})
\end{pmatrix},
\end{equation}
with $\omega = \tfrac{1+\sqrt{5}}{2}$, $\sigma > 0$ and $\varepsilon
\ge 0$. Apart from the parameter $\varepsilon$, it is the natural
restriction to $\R^{+}$ of the system considered in \cite{GOPY}  (see
Figure~\ref{dibuix-KGe}, where a graph of the attractor of this system
with $\sigma=1.5$ and $\varepsilon=0$ is shown).
\begin {figure}[htbp]
\begin{center}
\includegraphics[width=0.8\textwidth]{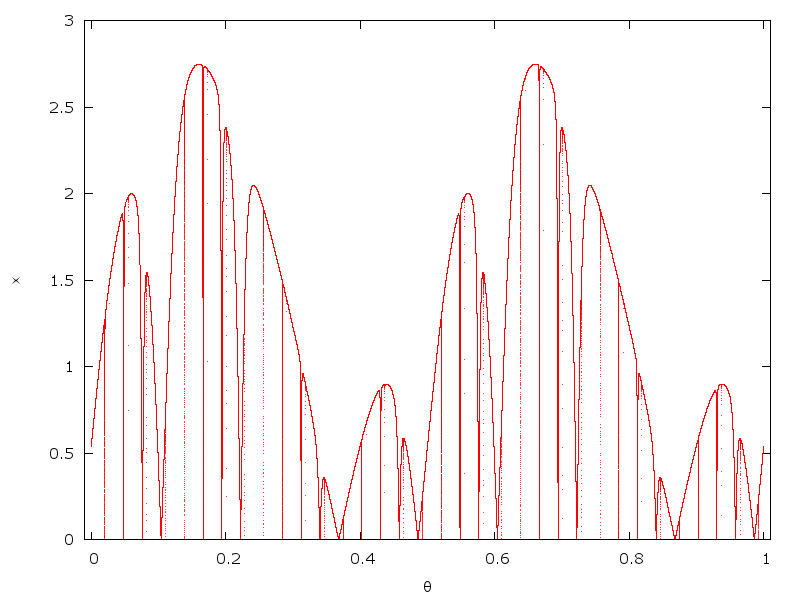}
\caption{The attractor of System \eqref{keller-GOPYe} for $\sigma=1.5$
and $\varepsilon=0$. Notice the abrupt changes in the graph of the
attractor.}\label{dibuix-KGe}
\end{center}
\end {figure}
In this case, the vertical  Exponent $\kappa(f_\sigma,g_\varepsilon)$ at
$x\equiv0$ is precisely $\log(\sigma)$. Hence, the interesting case (for
us) occurs when $\sigma > 1$.

The attractor of System~\eqref{equkelle} and its dynamics is described
by the following theorem (see also Figure~\ref{dibuix-KGe} for an
illustration of the graph of this attractor):

\begin{theorem}[G. Keller, \cite{Kell}]\label{thm:Keller}
There exists an upper semi continuous function
{\map{\varphi}{\SI}[\R^+]} whose graph is invariant under
System~\eqref{equkelle} and satisfies
\begin{enumerate}
\item The Lebesgue measure on the circle, lifted to the graph of
$\varphi$ is a Sinai-Ruelle-Bowen measure (that is,
\[
\lim_{k\to\infty} \dfrac{1}{k}
\sum_{i=0}^{k-1} f_\sigma(\Fse^{i} (\theta,x)) =
\int_{\SI} f_\sigma(\theta,\varphi(\theta))\ d\theta
\]
for every $f\in\Ccla{0}(\SI\times\R^{+},\R)$ and Lebesgue almost every
$(\theta,x)\in\SI\times\R^+$),

\item if $\kappa(f_\sigma,g_\varepsilon) \leq 0$ then $\varphi\equiv0$,

\item if $\kappa(f_\sigma,g_\varepsilon) > 0$ then $\varphi(\theta)>0$
for almost every
$\theta$,

\item if  $\kappa(f_\sigma,g_\varepsilon) > 0$ and $g$ vanishes at some
point then the
set
$\set{\theta \in \SI}{\varphi(\theta) > 0}$
is meager and $\varphi$ is almost everywhere discontinuous,

\item if $\kappa(f_\sigma,g_\varepsilon) > 0$ and $g>0$ then $\varphi$
is positive and
continuous; if $g\in\Ccla{1}$ then so is $\varphi$,

\item if $\kappa(f_\sigma,g_\varepsilon) \neq 0$ then
$\abs{x_{n}-\varphi(\theta_n)}\to 0$
exponentially fast
for almost every $\theta$ and every $x>0$.
\end{enumerate}
\end{theorem}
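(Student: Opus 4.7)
The plan is to construct $\varphi$ as a decreasing pointwise limit of a dynamically generated sequence of continuous functions and then to deduce properties (a)--(f) from two distinct dichotomies: (i)~the sign of the vertical Lyapunov exponent $\kappa(f_\sigma,g_\varepsilon)$, analysed via Birkhoff's theorem on the uniquely ergodic rotation $R_\omega$, and (ii)~whether $g_\varepsilon$ vanishes somewhere on $\SI$ or is uniformly positive, analysed via Baire category. For the construction, since $f_\sigma$ is bounded and $g_\varepsilon$ is continuous on the compact circle, one can pick $M>0$ with $f_\sigma(M)\cdot\max_\theta g_\varepsilon(\theta)\le M$, so the band $\SI\times[0,M]$ is forward invariant. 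Set $\varphi_0\equiv M$ and recursively
\[
\varphi_{n+1}(R_\omega(\theta)) := f_\sigma(\varphi_n(\theta))\,g_\varepsilon(\theta).
\]
Monotonicity of the fibre map $x\mapsto f_\sigma(x)g_\varepsilon(\theta)$ together with $\varphi_1\le\varphi_0$ yields, by induction, $\varphi_{n+1}\le\varphi_n$; thus $\varphi(\theta):=\lim_n\varphi_n(\theta)\in[0,M]$ exists, satisfies the invariance identity $\varphi\circ R_\omega = (f_\sigma\circ\varphi)\cdot g_\varepsilon$, and is upper semicontinuous as a decreasing limit of continuous functions.

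\emph{Dichotomy from $\kappa$.} Strict concavity and $f_\sigma(0)=0$ force $f_\sigma(x)\le f_\sigma'(0)x$ on $\R^+$, so $x_k\le x_0\prod_{i<k}f_\sigma'(0)g_\varepsilon(\theta_i)$. Unique ergodicity of $R_\omega$ combined with Birkhoff gives $\tfrac{1}{k}\sum_i\log g_\varepsilon(\theta_i)\to\int\log g_\varepsilon\,d\theta$ for every $\theta_0$, whence $\limsup_k\tfrac{1}{k}\log x_k\le\kappa$. When $\kappa\le 0$ this forces $x_k\to 0$ for every positive orbit, so $\varphi\equiv 0$, proving (b). When $\kappa>0$, the same averaging applied to a near-linear lower bound $f_\sigma(x)\ge f_\sigma'(0)x-Cx^2$ (valid close to $0$ by concavity and the $\Ccla{1}$ hypothesis) shows every orbit leaves any small $x$-neighbourhood of the invariant circle $x\equiv 0$, producing $\varphi>0$ almost everywhere, which is (c). Exponential attraction (f) then follows because strict concavity gives $f_\sigma'(\varphi(\theta))<f_\sigma'(0)$ on $\{\varphi>0\}$, so the derivative cocycle $\prod f_\sigma'(x_i)g_\varepsilon(\theta_i)$ averages to an exponent strictly less than $\kappa$. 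Finally, the SRB property (a) is automatic: the push-forward of Lebesgue on $\SI$ along the graph of $\varphi$ is invariant and ergodic (by unique ergodicity of $R_\omega$), and (f) transfers the resulting Birkhoff identity from points on the graph to Lebesgue-a.e. initial condition in $\SI\times\R^+$.

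\emph{Dichotomy from $g_\varepsilon$ and main obstacle.} If $g_\varepsilon(\theta_0)=0$, the invariance equation gives $\varphi(R_\omega^n(\theta_0))=0$ for every $n\ge 1$, and irrationality of $\omega$ makes this orbit dense in $\SI$. Upper semicontinuity turns $\{\varphi=0\}=\bigcap_n\{\varphi<1/n\}$ into a $G_\delta$, which is therefore a \emph{dense} $G_\delta$; Baire's theorem then makes its complement $\{\varphi>0\}$ meager. Since by (c) the same complement has full Lebesgue measure, at every $\theta\in\{\varphi>0\}$ one can choose $\theta_m\to\theta$ with $\varphi(\theta_m)=0$, forcing a jump and hence a.e. discontinuity, which is (d). Conversely, when $g>0$ on the compact circle, $\log g_\varepsilon$ is continuous and bounded; each $\varphi_n$ is then positive and continuous, and the exponential attraction from the previous paragraph is uniform in $\theta$ by compactness, so $\varphi_n\to\varphi$ uniformly and $\varphi$ is continuous. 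The $\Ccla{1}$ conclusion follows by applying the same uniformly contracting cocycle estimate to $\varphi_n'$. The technically hardest step is the positive lower bound in (c): the concavity upper bound is essentially free, but obtaining a \emph{uniformly positive} asymptotic level for $\varphi_n(\theta)$ requires careful quantitative control of the nonlinear cocycle close to the invariant circle $x\equiv 0$, and that estimate is what ultimately underpins both the meagerness argument in (d) and the uniform-convergence argument in (e).
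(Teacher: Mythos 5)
The paper does not actually prove this theorem: it is quoted from Keller's paper \cite{Kell}, and the text only records the construction on which that proof rests, namely the iteration $\varphi_k=\mathfrak{T}^k(c)$ of the transfer operator on a large constant, which yields a non-increasing sequence of continuous functions whose pointwise limit is the upper semi-continuous invariant $\varphi$. Your first paragraph reproduces exactly this construction, and your arguments for (d), (e) and (a) follow the standard lines (dense zero orbit plus upper semi-continuity gives a dense $G_\delta$ of zeros; uniform fibre contraction on an invariant band $[\delta,M]$ when $g>0$; transfer of the Birkhoff identity via (f)).

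The genuine gap is in your derivation of (c), and it propagates into (b) and (d). Proving that every orbit with $x_0>0$ eventually leaves a small neighbourhood of $x\equiv0$ shows that the invariant circle is a repellor, but it says nothing about $\varphi$, which is approached \emph{from above}: in the pinched case orbits also escape every neighbourhood of $x\equiv0$, and yet $\varphi$ vanishes on a residual set. The whole point of (c) versus (d) is the tension between ``positive a.e.''\ and ``zero on a residual set'', and that cannot be extracted from pointwise escape estimates. The missing ingredient is a zero--one law: since $\varphi(R_\omega\theta)=f_\sigma(\varphi(\theta))g_\varepsilon(\theta)$ with $f_\sigma$ strictly increasing and $f_\sigma(0)=0$, the set $\{\varphi>0\}$ is $R_\omega$-invariant modulo the null set $\{g_\varepsilon=0\}$, hence has Lebesgue measure $0$ or $1$ by ergodicity of the irrational rotation; one then excludes measure $0$ when $\kappa>0$ by integrating the invariance equation, using that concavity makes $\log\bigl(f_\sigma(\varphi_k)/\varphi_k\bigr)$ increase monotonically to $\log f_\sigma'(0)$ as $\varphi_k\downarrow0$, so that $\int\log\varphi_k\,d\theta-\int\log\varphi_0\,d\theta$ would grow like $\kappa k\to+\infty$, contradicting $\varphi_k\le M$. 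The same integral identity is what settles the boundary case $\kappa=0$ of (b), where your bound $\limsup_k\tfrac1k\log x_k\le\kappa$ gives nothing; and in (f) the fibre exponent along the graph must be shown to be strictly \emph{negative}, not merely strictly less than $\kappa$, which follows from $f_\sigma'(x)<f_\sigma(x)/x$ together with $\int\bigl(\log\varphi\circ R_\omega-\log\varphi\bigr)\,d\theta=0$.
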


Observe that, when $\kappa(f_\sigma,g_\varepsilon) > 0$ and $g$ vanishes
at some point
(i.e. there exists a fibre whose image is degenerate to a point), it
follows from statements~(c,d) that $\varphi$ is discontinuous almost
everywhere. If there exists $\theta_0\in\SI$ such that
$g_{\varepsilon}(\theta_0)=0$,
we will say that the system is \emph{pinched}. In the particular case
of
System~\eqref{keller-GOPYe}, the pinching condition implies that
$\varepsilon = 0$ and, since $|\cos(2\pi\theta)|$ vanishes for $\theta
\in \left\{\tfrac{1}{4},\tfrac{3}{4}\right\}$, it follows that the set
\begin{equation}\label{denset}
  \left\{(\tfrac{i}{4}+k\omega \pmod{1},0)\ \colon\
       k\in\N,\  i\in\{1,3\}\right\}
\end{equation}
is both a subset of the attractor and is dense (and invariant) in
$x\equiv0.$ On the other hand, if $\varepsilon>0$ we can not have a
dense set of \emph{pinched} points.

The proof of the above theorem is based on the iteration of the
\emph{Transfer Operator} of the system and many of the properties of
$\varphi$ can be derived from such iteration. Since we will strongly use
this construction let us briefly explain it. Let $\mathscr{P}$ be the
space of all functions (not necessarily continuous) from $\SI$ to $\R$.
If we look for a functional version of the System~\eqref{equkelle} in
the space $\mathscr{P}$ one can define the \emph{Transfer Operator}
{\map{\mathfrak{T}}{\mathscr{P}}} as
\[
\mathfrak{T}(\varphi)(\theta) =
f_\sigma(\varphi(\Rot{\omega}{-1}(\theta))) \cdot
g_{\varepsilon}(\Rot{\omega}{-1}(\theta)).
\]
\begin{remark}\label{remarcaposada} From the above definition we obtain
\[
\mathfrak{T}(\varphi)(\theta) =
\pi_x\left(
   \Fse( \Rot{\omega}{-1}(\theta),
\varphi(\Rot{\omega}{-1}(\theta)) )
\right)
\]
where {\map{\pi_x}{\SI\times\R^+}[\R^+]} denotes the projection with
respect to the second component.
\end{remark}

Notice that the graph of a function {\map{\varphi}{\SI}[\R]} is
invariant for the System~\eqref{equkelle} if and only if
$\mathfrak{T}(\varphi) = \varphi$.

To obtain the map $\varphi$ from Theorem~\ref{thm:Keller}, Keller
takes a sufficiently large constant function $\varphi_0=c$ (with
$
c > \sup\limits_{x\in\R} f_\sigma(x)
    \max\limits_{\theta\in[0,1]} g_{\varepsilon}(\theta)
$) and iterates it under the transfer operator $\mathfrak{T}$ (see
Figure~\ref{transfer}). In such a way he gets, since the map $f$ is
monotone, a non-increasing  sequence of continuous maps given by
\begin{equation}\label{iterTrans}
\varphi_{k}=\mathfrak{T}(\varphi_{k-1})=\mathfrak{T}^{k}(c),
\end{equation}
where $\mathfrak{T}^{k}$ stands for the $k$-th iterate of the Transfer
Operator. Then, following Keller's proof, one has that
\[
\varphi:=\lim_{k\to\infty}\varphi_k=\inf_{k\to\infty}\varphi_k
\]
and the point-wise convergence of the above sequence is exponentially
fast. Notice that the shape of $\varphi$ depends on the parameters
$\sigma$ and $\varepsilon$.

\begin{figure}
\begin{center}
\includegraphics[width=0.6\textwidth]{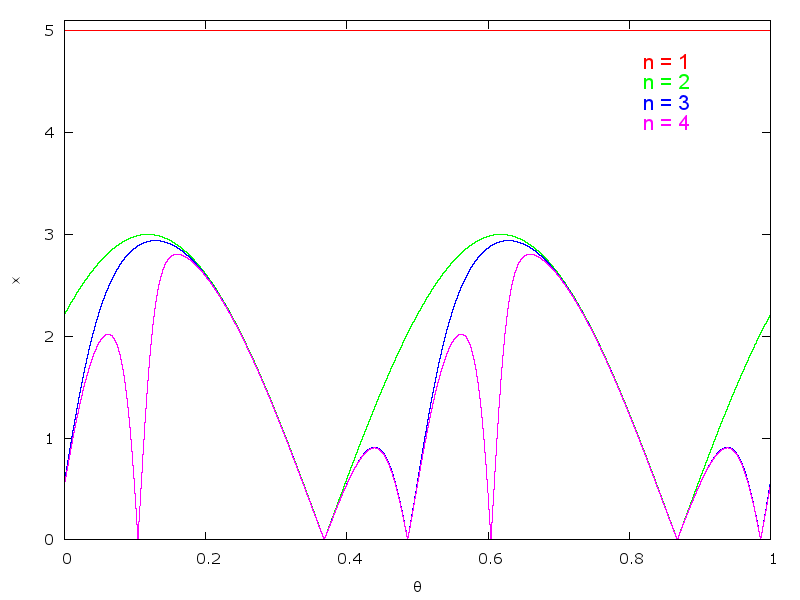}
\caption{The constant function $c=5$ and three iterations of the
Transfer Operator $\mathfrak{T}$ for System~\ref{keller-GOPYe} with
$\sigma=1.5$ and $\varepsilon=0$. The function $c$ is plotted in
\textcolor{red}{red}, $\mathfrak{T}(c)$ in \textcolor{green}{green},
$\mathfrak{T}^2(c)$ in \textcolor{blue}{blue} and $\mathfrak{T}^3(c)$
in \textcolor{magenta}{magenta}.}\label{transfer}
\end{center}
\end{figure}

Moreover, when proving the upper semi continuity of $\varphi$, it is
shown that all sets $\{\theta\in\SI\ \colon\
\varphi(\theta)<\varepsilon\}$ are open. This means that $\varphi$ is
continuous at each point where $g$ vanishes (whether pinched or
non-pinched) and, also, it is in $\Li(\SI)$. That is, the function
$\varphi\in\Li(\SI)$ is continuous at
\[
\zeron{n}:=\{ Z_{g_{\varepsilon}} + kw \pmod{1}\ :\ k=0,\ \dots,\
n\}=\bigcup_{k=0}^{n}R_{\omega}^{k}(Z_{g_{\varepsilon}})\subset\SI,
\]
where $Z_{g_{\varepsilon}}\subset\SI$ is the finite and discrete set
where the function $g_{\varepsilon}$ vanishes.

\subsection{On the regularity of the attractor}

The regularity of the attractor of System~\eqref{equkelle} in terms of
$\varepsilon$ and the regularity of $g_{\varepsilon}$ is given by the 
following

\begin{proposition}\label{Fita_s}
Let {\map{\varphi}{\SI}[\R^+]} be the upper semi continuous function 
whose graph is invariant under
System~\eqref{equkelle}.
Assume that $g_\varepsilon\in\Bes[s](\SI)$ with $s\in(0,1]$.
\begin{enumerate}
\item If $\varepsilon > 0$ then $\varphi\in\Bes[s](\SI).$
\item If $\varepsilon=0$ then $\varphi\in\Bes[0](\SI)$.
\end{enumerate}
\end{proposition}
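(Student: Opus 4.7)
My plan splits cleanly along the dichotomy $\varepsilon=0$ vs.\ $\varepsilon>0$, and I would dispatch part (b) first since it is the short half. For $\varepsilon=0$ the system is pinched, so Keller's Theorem~\ref{thm:Keller}(d) asserts that $\varphi$ is upper semi continuous and discontinuous on a set of full Lebesgue measure; in particular it is not continuous. Moreover, the construction $\varphi=\inf_k \mathfrak{T}^k(c)$, with $c$ a finite constant that bounds all iterates from above, shows $\varphi\in\Liin(\SI)$. Passing to the periodic extension $\PER{\varphi}\in\Liin(\R)$, which is again upper semi continuous and non-continuous, Lemma~\ref{salt} yields $\PER{\varphi}\in\Bes[0](\R)$, and Lemma~\ref{PER} then transfers this back to $\varphi\in\Bes[0](\SI)$.

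For part (a), my plan is to propagate the Besov regularity of $g_\varepsilon$ to $\varphi$ via the invariance equation. Since $\varepsilon>0$ gives $g_\varepsilon\ge\varepsilon>0$ on $\SI$, Keller's Theorem~\ref{thm:Keller}(e) ensures that $\varphi$ is continuous and strictly positive, hence bounded and bounded away from zero on the compact circle. I would iterate the identity $\varphi\circ R_\omega=f_\sigma(\varphi)\,g_\varepsilon$ a total of $n$ times, writing $\varphi(R_\omega^n\theta)=F_n(\varphi(\theta),\theta)$, where $F_n$ is built by successive compositions with $f_\sigma$ and multiplications by $g_\varepsilon$ along the orbit $\theta,R_\omega\theta,\dots,R_\omega^{n-1}\theta$. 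Taking a finite increment of size $h$ in $\theta$ and applying the mean value theorem inductively splits $\varphi(R_\omega^n\theta+h)-\varphi(R_\omega^n\theta)$ into a transport part equal to the cocycle
\[
A_n(\theta):=\prod_{k=0}^{n-1}f_\sigma'\bigl(\varphi(R_\omega^k\theta)\bigr)\,g_\varepsilon(R_\omega^k\theta)
\]
times $\varphi(\theta+h)-\varphi(\theta)$, plus a forcing sum of $n$ terms, each a product of a partial cocycle with an increment of $g_\varepsilon$ along the orbit. Birkhoff's ergodic theorem identifies $\tfrac{1}{n}\log|A_n|$ with the fibered Lyapunov exponent along the attracting graph $\varphi$, which is strictly negative---this is precisely the hyperbolicity responsible for the exponential convergence $\mathfrak{T}^k(c)\to\varphi$ used in Keller's construction. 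Consequently the transport part decays geometrically in $n$ and vanishes in the limit since $\varphi$ is bounded, while the forcing sum is controlled by $\norm{g_\varepsilon}_{\Bes[s]}|h|^s$ times a convergent geometric series in the partial cocycles. Letting $n\to\infty$ and invoking density of the orbit $\{R_\omega^n\theta\}_n$ in $\SI$ would yield a uniform bound $|\varphi(\theta'+h)-\varphi(\theta')|\le C|h|^s$ (with the Zygmund condition replacing H\"older when $s=1$), and hence $\varphi\in\Bes[s](\SI)$.

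The principal technical obstacle will be upgrading Birkhoff's almost-everywhere convergence into a \emph{uniform} geometric bound $|A_n(\theta)|\le C\rho^n$ with $\rho<1$ valid for every $\theta\in\SI$ and every $n$. When $\varepsilon>0$, the integrand $\log|f_\sigma'(\varphi)g_\varepsilon|$ is continuous and bounded on $\SI$ (since $\varphi$ is continuous and strictly positive, and $g_\varepsilon\ge\varepsilon>0$), so I expect the same sub-additive mechanism underlying the uniform convergence of $\mathfrak{T}^k(c)$ in Keller's proof to supply the required uniform contraction of the cocycle. Once this is secured, the remaining Besov-space bookkeeping---the product rule in $\Bes[s]$ and composition with the $\Ccla{1}$ map $f_\sigma$ restricted to the compact range of $\varphi$---reduces to routine estimates.
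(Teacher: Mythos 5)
Your part (b) coincides with the paper's argument: Theorem~\ref{thm:Keller}(c,d) gives that $\varphi$ is upper semi continuous but not continuous in the pinched case, and Lemma~\ref{salt} (transferred through Lemma~\ref{PER}) yields $\varphi\in\Bes[0](\SI)$. For part (a) the paper does something much shorter: it simply invokes Theorem~1.2 of \cite{Star1}, which is exactly the statement that a non-pinched, fibre-contracting forced system over a rotation has an invariant graph inheriting the H\"older class of the forcing. What you have written out is, in essence, the proof of that cited theorem: the telescoped invariance equation, the cocycle $A_n$, and the geometric forcing sum are Stark's fibre-contraction argument, so your route is self-contained where the paper outsources. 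Three points would need to be nailed down. First, strict negativity of the fibred Lyapunov exponent $\int_{\SI}\log\bigl(f_\sigma'(\varphi(\theta))\,g_\varepsilon(\theta)\bigr)\,d\theta$ is not automatic from ``the graph attracts''; it follows from strict concavity of $f_\sigma$ with $f_\sigma(0)=0$, which gives $f_\sigma'(x)<f_\sigma(x)/x$ and hence bounds the integrand above by $\log\varphi(R_\omega\theta)-\log\varphi(\theta)$, whose integral vanishes. Second, the upgrade from almost-everywhere to a uniform bound $|A_n(\theta)|\le C\rho^n$ is cleanest via unique ergodicity of $R_\omega$ (the observable $\log(f_\sigma'(\varphi)g_\varepsilon)$ is continuous because $\varphi$ is continuous and positive and $g_\varepsilon\ge\varepsilon>0$, so Birkhoff averages converge uniformly), and you must account for the fact that the mean value theorem places $f_\sigma'$ at intermediate points between $\varphi(\theta)$ and $\varphi(\theta+h)$, i.e.\ you contract a perturbation of $A_n$, which uniform continuity handles for small $|h|$. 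Third, at $s=1$ the space $\Bes[1]$ is the Zygmund class, and your first-difference scheme really proves a Lipschitz bound when $g_\varepsilon$ is Lipschitz; since Lipschitz embeds in $\Bes[1]$ this covers the paper's application, but not literally a $g_\varepsilon$ that is Zygmund without being Lipschitz. None of these is fatal: the citation buys brevity, while your route exhibits the mechanism explicitly (including why the constant degenerates as $\varepsilon\to0$, which the paper later observes numerically in Figure~\ref{cutre02}).
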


\begin{proof}
Statement~(a) follows from~\cite[Theorem 1.2]{Star1}
and Statement~(b) follows from Theorem~\ref{thm:Keller} and 
Lemma~\ref{salt}.
\end{proof}

\begin{remark}
For $\varepsilon>0$ we can also have $\varphi\in\Bes[0](\SI)$.
Indeed, let $g_{\varepsilon}$ (in System~\eqref{equkelle}) be such that
\[
\lim_{\theta\to 0}\frac{|\theta|^\alpha}{g_{\varepsilon}(\theta)}=0.
\]
This means that $g_{\varepsilon}(\theta)$ does not verify any H\"{o}lder
condition but it is continuous (see
Example~\ref{alphanegex}\eqref{ex2}).
By Keller's Theorem, the invariant
function $\varphi$ is continuous when $\varepsilon>0$. However, by
the choice of $g_{\varepsilon}$, it cannot verify any H\"{o}lder
condition. Therefore, $\varphi\in\Bes[0](\SI)$ (even in the non-pinched
case).
\end{remark}
\section{An algorithm to compute the wavelet coefficients and
regularities of the attractors of
System~\eqref{equkelle}}\label{sec::extended_alg}
As it has been already said we want to approximate the invariant curve
$\varphi$ of the System~\eqref{keller-GOPYe} in terms of wavelets
with control quality. Since
$g_{\varepsilon}(\theta)=\varepsilon+\abs{\cos(2\pi\theta)}$
is Lipschitz then, by Proposition~\ref{Fita_s}, we know that
the regularity of the attractor is 0 when $\varepsilon = 0$ and 1
when $\varepsilon > 0$.
The control quality is implemented by comparing this theoretical value
with the estimate of the regularity obtained from the wavelet 
coefficients.

Since the attractor of System~\eqref{keller-GOPYe} is the
graph of a map {\map{\varphi}{\SI}[\R^+]} we will use the
methodology described in the previous section applied to the function
$\PER{\varphi}$ (see Lemma~\ref{PER} and Remark~\ref{pascual}). However,
$\PER{\varphi}$, in the pinched case,  is discontinuous almost
everywhere (and the corresponding attractor is called \emph{strange}).
Therefore, we are \textbf{not} allowed to apply verbatim the algorithm 
from
the previous section. In the rest of this section we will describe how
to solve this problem in the implementation of the strategy from the
previous section.

More concretely, to compute an approximation of the type
\eqref{final_approximation} for $\PER{\varphi},$ since we do not have an
explicit formula for $\varphi,$ we will use Theorem~\ref{thm:Keller}(f)
and the transfer operator to get a sufficiently good numerical
approximation of this function. Indeed, by Theorem~\ref{thm:Keller}(f),
for almost every $\theta_0\in\SI$, any $x_0>0$ and any $\varepsilon>0$
there exists $N_0$ such that for every $n\geq N_0$ we have:
\[
\abs{x_n - \varphi(\theta_{n})} < \varepsilon
\]
where $(\theta_n,x_n)=\Fse^{n}(\theta_0,x_0)$. Moreover, the
points $(\theta_n,x_n)$ with $n\in \{N_0,N_0+1,\dots,N_0+2^{J}-1\}$
approximate exponentially fast the points
$(\theta_n,\varphi(\theta_n))$ from $\mathrm{graph}(\varphi).$
Therefore,
\begin{equation}\label{data}
\set{(\theta_n,x_n)}{n = N_0,N_0+1,\dots,N_0+2^{J}-1}
\end{equation}
is an approximate mesh of $\mathrm{graph}(\varphi)$ provided that $J$
is large enough. To fix the mesh we choose a random point $\theta_0$
and we fix some $x_0 > \sup_{x\in\R^+} 2\sigma\tanh(x) = 2\sigma.$
However, this approximate mesh has two problems to be used in our
computations:
\begin{enumerate} [\textbf{Problem (1)}]
\item as we will see, we need a mesh of the graph of
$\varphi$ at dyadic points of the form
$i 2^{-J}$ for $i = 0,1,\dots, 2^{J}-1$,

\item we cannot use Lemma~\ref{FWT-InApprox} to estimate the initial
coefficients $a_{-J}[n]$ since our map $\varphi$ is discontinuous
almost everywhere (and, hence, not Lipschitz).
\end{enumerate}
In the following two subsections, we will explain how one can solve the
above two problems.
\subsection{A solution to Problem (1): a $\Ccla{1}$
homeomorphism}
As we have said, we need a mesh of the graph of $\varphi$ at dyadic
points of the form $\theta_{i} = i2^{-J}$ for $i = 0,1,\dots,2^{J}-1$
but, clearly, if we obtain the points $(\theta_n,x_n)$ just as iterates
of a single point by $\Fse$ this condition is not satisfied. The natural
approach which would be to approximately compute the points of the graph
of $\varphi$ based at the dyadic points by interpolating the obtained
values is not feasible since, by Theorem~\ref{thm:Keller}, we know that
$\varphi$ is upper semi-continuous and discontinuous everywhere. Then we
propose the following solution which consists in moving to a conjugate
system with the desired properties. To do this, first we relabel the
points $\{(\theta_n,x_n)\}_{n=N_0}^{N_0+2^{J}-1}$ to a sequence
$\{(\widetilde{\theta}_i,z_i)\}_{i=0}^{2^{J}-1}$ so that
\[
0 \leq \widetilde{\theta}_0 < \widetilde{\theta}_1 <
       \dots < \widetilde{\theta}_{2^{J} -1} < 1
\]
(we do this simply by sorting the data \eqref{data} with respect to the
first coordinate; see Remark~\ref{futnut}). In particular if
$n \in \{N_0,N_0+1,\dots,N_0+2^{J}-1\}$ and
$i = i(n) \in \{0,1,\dots,2^{J}-1\}$ is such that
$\widetilde{\theta}_i = \theta_n$, then $z_i = x_n.$

Now, we consider a $\Ccla{1}$ homeomorphism {\map{h}{\SI}} such that
$h\left(i 2^{-J}\right) = \widetilde{\theta}_i$ for $i =
0,\dots,2^{J}-1$, $h(1) = \widetilde{\theta}_0 + 1.$ Such map $h$ can
be obtained by taking $h$ to be, for instance, a cubic spline in the
intervals $\left[i 2^{-J}, (i+1)2^{-J}\right]$ for
$i = 0,\dots,2^{J}-1$.

Clearly $\left\{\left(i 2^{-J}, z_i\right)\right\}_{i=0}^{2^{J}-1}$ is
now an approximate mesh of $\mathrm{graph}(\varrho)$ with $\varrho
=\varphi \circ h,$ based at the dyadic points. Thus, \emph{we will use
the list of pairs
$\left\{\left(i 2^{-J}, z_i\right)\right\}_{i=0}^{2^{J}-1}$ to estimate
the regularity of $\varrho$}.

\begin{remark}\label{conjugacy}
The map $\varrho$ has the following dynamical interpretation. Consider
the homeomorphism {\map{H}{\SI\times\R^+}} defined by
$H(\theta,x) = (h(\theta),x)$. Then, one can check that,
$\mathrm{graph}(\varrho)$ is the attractor of the dynamical system
\[
\left( H^{-1}\circ\Fse \circ H\right) (\theta,x) =
(h^{-1}(R_{\omega}(h(\theta))),f_\sigma(x)g_{\varepsilon}(h(\theta))),
\]
which is conjugate to System~\eqref{keller-GOPYe}.
\end{remark}

To obtain the regularity of $\varphi$ we need to relate the regularities
of $\varrho$ and $\varphi$ in terms of the Besov Spaces $\Bes(\SI)$. To
do this we use the fact that $h$ is a $\Ccla{1}$ homeomorphism and that
a homeomorphism is a bijective mapping of $\Bes(\R)$ onto itself (we
refer the reader to Section~4.3 from \cite{Tri03} for a more detailed
explanation). More precisely,

\begin{proposition}\label{conjjectura}
Let $f\in\Bes(\SI)$ with $s\in\R$ and let {\map{h}{\SI}} be a
$\Ccla{m}$ diffeomorphism with $m \ge s$. Then $f\circ h$
belongs to $\Bes(\SI)$.
\end{proposition}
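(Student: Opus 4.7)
The plan is to reduce the statement on $\SI$ to an analogous statement on $\R$ and then invoke the diffeomorphism-invariance theorem for Besov spaces from Section~4.3 of \cite{Tri03}, which is exactly the reference pointed to just before the proposition. First, using Lemma~\ref{PER}, $f \in \Bes(\SI)$ is equivalent to $\PER{f} \in \Bes(\R)$. Similarly, since $h$ is a $\Ccla{m}$ circle diffeomorphism, its lift $\PER{h}: \R \to \R$ satisfies $\PER{h}(x+1) = \PER{h}(x) \pm 1$ and is a $\Ccla{m}$ diffeomorphism of $\R$. Moreover $\PER{f \circ h} = \PER{f} \circ \PER{h}$, so it suffices to establish that $\PER{f} \circ \PER{h} \in \Bes(\R)$.

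For the real-line statement I would split the proof into three regimes depending on the sign of $s$. For $s > 0$, the space $\Bes(\R)$ coincides (with equivalent quasi-norms) with the Hölder-Zygmund space $\Ccla{s}(\R)$. When $s \in (0,1)$ this is the classical Hölder condition and the estimate
\[
\abs{f(\PER{h}(x)) - f(\PER{h}(y))} \leq [f]_{\Ccla{s}}\, \norm{\PER{h}'}_{\infty}^{s}\, \abs{x-y}^{s}
\]
is immediate; for general $s > 0$ one uses Faà di Bruno's formula to express derivatives of $\PER{f} \circ \PER{h}$ up to order $\lfloor s \rfloor$ as polynomials in derivatives of $\PER{f}$ and $\PER{h}$, and then controls the top-order Zygmund-type difference. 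The assumption $m \geq s$ ensures that all derivatives of $\PER{h}$ that appear are continuous and bounded on compact pieces, giving the required bound on $\norm{\PER{f} \circ \PER{h}}_{\infty,\infty,s}$.

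For $s < 0$, the space $\Bes(\R)$ is genuinely distributional and one uses duality: pullback by $\PER{h}$ is the transpose (modulo the Jacobian factor $1/\PER{h}'$, which is $\Ccla{m-1}$) of pullback by $\PER{h}^{-1}$. Since pointwise multiplication by a sufficiently smooth function preserves $\Bes[\sigma]$ for $|\sigma| < m$, and pullback by $\PER{h}^{-1}$ on the Besov predual $\mathscr{B}^{-s}_{1,1}$ is handled by the positive-regularity case, the result follows. The borderline case $s = 0$ can then be obtained by real interpolation between $\Bes[\varepsilon]$ and $\Bes[-\varepsilon]$ for small $\varepsilon > 0$, or equivalently by a direct wavelet-coefficient argument via Theorem~\ref{util}.

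The main obstacle in a fully self-contained proof would be the negative-regularity case, where one must interpret $\PER{f} \circ \PER{h}$ as a distribution via the Jacobian-weighted pullback and track how the chain of $\Ccla{m}$ estimates interacts with the dual pairing. In practice, since all of this is carried out in generality in \cite[Section~4.3]{Tri03} for $\mathscr{B}^{s}_{p,q}$, the cleanest proof is simply to specialize that result to $p = q = \infty$ and to the periodic setting, after the reduction from $\SI$ to $\R$ described above.
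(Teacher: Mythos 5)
Your proof is correct and takes essentially the same route as the paper, which in fact records no proof at all beyond citing the diffeomorphism-invariance result of Section~4.3 of \cite{Tri03} (the very reference you rely on), implicitly combined with the $\SI$-to-$\R$ reduction via $\PER{f}$ and the lift of $h$ that you make explicit. The three-regime sketch you add on $\R$ goes beyond what the paper offers; the only caveat is that in the $s<0$ regime your duality argument needs the smoothness of $h$ to dominate $\abs{s}$, which the stated hypothesis $m\ge s$ does not by itself guarantee — but that is a looseness in the proposition as stated rather than a defect of your argument.
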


Thus, by Proposition~\ref{conjjectura} and Proposition~\ref{Fita_s}, the
regularity of $\varphi$ and $\varrho$ coincide and we can estimate the
regularity of $\varrho$ by using the mesh
$\left\{\left(i 2^{-J}, z_i\right)\right\}_{i=0}^{2^{J}-1}$.

We remark that the exact formula for $h$ is irrelevant for our
algorithm. We only use the fact that such a map $h$ exists and the fact
that it can be taken $\Ccla{1}.$ To obtain the data mesh $\left\{\left(i
2^{-J}, z_i\right)\right\}_{i=0}^{2^{J}-1}$ that approximates $\varrho$
(the attractor of the conjugate system) we simply have to sort the
obtained mesh $\{(\theta_n,x_n)\}_{n=N_0}^{N_0+2^{J}-1}$ for $\varphi$
with respect to the first coordinate $\theta$ and replace
$\widetilde{\theta}_i$ by $i 2^{-J}.$ Of course, this does not add any
further computational error to the mesh other than the errors coming
from the iteration of the system and truncation errors derived from the
choice of $J$. Furthermore, the exponential contraction of the system to
the attractor (see Theorem~\ref{thm:Keller}(f)) still holds for the
conjugate system, thus assuring that there is no loss of precision when
replacing $\varphi$ by $\varrho$ (see Subsection~\ref{sec:P2}).

\begin{remark}\label{futnut}
The process of sorting the data of an array of $2^{30}$ points from
$\SI\times\R^+$ (stored as pairs of \texttt{double} variables in
\texttt{C}) turns to be the bottleneck of the whole algorithm (and the
most time consuming task of the whole program). Moreover, even the
process of computing and filling the array with the initial mesh of the
function $\varphi$ already spends a ``visible'' amount of CPU time.
Indeed, the iteration, storing  and sorting process (with a standard
sort algorithm like Heapsort) of this data spends about 2200 CPU
seconds, with a remarkable variability which depends on the initial
sorting of the data, in a computer with a Xeon processor at 3 GHz and 32
Gb of RAM memory. In order to reduce the time elapsed in the sorting
process we use the following trick based on the fact that the dynamical
system generating the $\theta_i$ data is the irrational rotation
$R_\omega$. In this case we know that the Lebesgue measure is the unique
ergodic measure of $R_\omega$ and, hence, its averaged spatial
distribution is uniform and it is controlled approximately by the
Birkhoff's Ergodic Theorem applied to the Lebesgue measure. Indeed, we
have
\[
\sharp\left(
  \left\{\theta, R_{\omega}(\theta), \dots,
R_{\omega}^{k-1}(\theta)\right\}
  \cap \left[\tfrac{i}{N},\tfrac{i+1}{N}\right)
\right) \approx \frac{k}{N}
\]
for $k$ large enough and for every $i \in \{0,1,\dots, N-1\}.$ The
interpretation of this equation is that the statement
\begin{equation}\label{BET-orbit}
\sharp\left(
  \left\{\theta_{N_0}, \theta_{N_0+1}, \dots,
\theta_{N_0+2^J-1}\right\}
  \cap \left[\tfrac{i}{2^J},\tfrac{i+1}{2^J}\right)
\right) = 1
\end{equation}
holds with high frequency for $J$ large enough (observe that in this
case we have
$
 \left\{\theta_{N_0}, \theta_{N_0+1}, \dots,
\theta_{N_0+2^J-1}\right\} =
 \left\{\theta_{N_0}, R_{\omega}(\theta_{N_0}), \dots,
R_{\omega}^{2^J-1}(\theta_{N_0})\right\}
$%
). Moreover, when \eqref{BET-orbit} holds, we have
$i = \left\lfloor 2^J \theta_l \right\rfloor,$
where $\theta_l$ is the unique element from the set
$
  \left\{\theta_{N_0}, \theta_{N_0+1}, \dots,
\theta_{N_0+2^J-1}\right\}
  \cap \left[\tfrac{i}{2^J},\tfrac{i+1}{2^J}\right)
$
and $\lfloor \cdot \rfloor$ denotes the integer part function. This
observation gives a good ``hash function'' and the following efficient
algorithm to store and sort the data
$\{(\theta_n,x_n)\}_{n=N_0}^{N_0+2^{J}-1}.$ First, for
$n=N_0,N_0+1,\dots N_0+2^J - 1$ we compute the point
$(\theta_n,x_n)=\Fse(\theta_{n-1},x_{n-1})$. Then, we store it in the
position $i = \left\lfloor 2^J \theta_n \right\rfloor$ of the array
data, if this slot is free. Otherwise, we store the point
$(\theta_n,x_n)$ in a free position $j = j(i)$ of the array data such
that $\abs{j-i}$ is minimal. According to the above observations this
will happen with low frequency and the array data will be almost sorted.
Moreover, the positions of the array data which are not sorted are close
to the place where they should be when the array is sorted. This is
exactly the situation where the direct insertion sorting algorithm can 
be used with
very good results. This means that we are using a method of order
$\mathcal{O}(2^J+d)$ where $d$ is the number of insertions (which are
very low due to the way we have stored all data) instead of a method of
order $\mathcal{O}(J2^J)$ as the Heapsort algorithm.

With this trick, the  iteration, storing  and sorting process lasts
about 300 CPU seconds, almost without variability, which clearly
improves the efficiency of the program.
\end{remark}

\subsection{A solution to Problem (2): calculating the
coefficients $\PER{a}_{-J}[n]$ of $\PER{\varrho}$}\label{sec:P2}
We introduce the following notation for the wavelet coefficients of
$\PER{\varrho}$:
\[
\PER{a}_j[n] := \SP{\PER{\varrho},\phi_{j,n}}
  \quad\text{and}\quad
\PER{d}_j[n]:=\SP{\PER{\varrho},\psi_{j,n}}
\]
for $j,n \in \Z.$ Observe that, when $\varrho$ is regular enough,
Lemma~\ref{FWT-InApprox} gives
$2^{-J/2}\varrho\left(\tfrac{n}{2^{J}}\right)$ as an estimate for the
coefficients $\PER{a}_{-J}[n].$ But, as we have pointed out, $\varphi$
(and hence $\PER{\varrho}$) is discontinuous almost everywhere and the
above estimate of $\PER{a}_{-J}[n]$ is, a priori, not valid. However, as
we will see, the element $z_n \approx \varrho\left(n 2^{-J}\right)$ from
our data give indeed a good estimate for $\PER{a}_{-J}[n]$ because our
mesh is based at the dyadic points $n 2^{-J}.$

As it has been already said in Section~\ref{sec::Statement}, $\varphi$
is the point-wise limit of a non-increasing sequence of continuous
(and, hence, uniformly continuous) functions
{\map{\varphi_{k}}{\SI}[\R^+]} defined by
\[
 \varphi_{0}(\theta) = c
 \qquad\text{and}\qquad
 \varphi_{k+1}(\theta) = \mathfrak{T}(\varphi_k)(\theta)
\]
for every $\theta \in \SI$ and $c > \sup_{x\in\R^+} 2\sigma\tanh(x) = 
2\sigma.$
Consequently,
$\varrho(\theta) = \lim_{k\to\infty}  \varphi_{k}(h(\theta))$
for every $\theta$.

\begin{remark}\label{iter_transfer}
\emph{If we take $x_0 = c = \varphi_0(\theta_0)$ then $x_k =
\varphi_k(\theta_k)$ for every $k \ge 1$}. To see this notice that, from
the definition of the points $(\theta_n,x_n)$ and $\Fse$, we get
\[
\theta_k = R_{\omega}(\theta_{k-1})
\qquad\text{and}\qquad
x_k = \pi_x(\Fse(\theta_{k-1},x_{k-1}))
\]
for every $k \ge 1$. Now, we proceed by induction. We
assume that $x_{k-1} = \varphi_{k-1}(\theta_{k-1})$ fore some $k \ge 0$.
Then, by Remark~\ref{remarcaposada},
\begin{align*}
x_k &= \pi_x(\Fse(\theta_{k-1},x_{k-1}))
     = \pi_x(\Fse(\theta_{k-1},\varphi_{k-1}(\theta_{k-1})))
\\
    &= \mathfrak{T}(\varphi_{k-1})(R_{\omega}(\theta_{k-1}))
     = \varphi_k(\theta_k).
\end{align*}
\end{remark}

Since the scaling function $\phi$ of a Daubechies wavelet is continuous,
so is $\phi_{-J,n}$ for each $n$. Hence, from the definition of the
coefficients $\PER{a}_{-J}[n]$ and the Dominated Convergence Theorem we
have:
\newcommand{\akper}{a^{k,{\scriptscriptstyle\mathrm{PER}}}_{-J}}
\begin{align*}
\PER{a}_{-J}[n]
  &= \int_{\mathrm{supp}(\phi_{-J,n})}
      \PER{(\varphi\circ h)}(\theta) \phi_{-J,n}(\theta)\ d\theta \\
  &= \lim_{k\to\infty} \int_{\mathrm{supp}(\phi_{-J,n})}
      \PER{(\varphi_k\circ h)}(\theta) \phi_{-J,n}(\theta)\ d\theta \\
  &= \lim_{k\to\infty} \akper[n],
\end{align*}
where
$
 \akper[n] :=  \SP{\PER{(\varphi_k\circ h)}, \phi_{-J,n}}.
$
From the proof of the Dominated Convergence Theorem, it can be shown
that $\akper[n]$ converge exponentially fast to $\PER{a}_{-J}[n]$.
Therefore, if $k$ is large enough, by Lemma~\ref{FWT-InApprox} we have
\[
\PER{a}_{-J}[n]\sim
\akper[n] \approx 2^{-J/2}\PER{(\varphi_k\circ h)}(n 2^{-J}) =
2^{-J/2}\varphi_{k}(h(n 2^{-J}))
\]
for $n = 0,\dots,2^{J}-1$ (where $\sim$ means exponentially close).

From the definition of $h$ it follows that, given $n \in
\{0,1,\dots,2^{J}-1\},$ there exists $k \in
\{N_0,N_0+1,\dots,N_0+2^{J}-1\}$ such that $h\left(n 2^{-J}\right) =
\widetilde{\theta}_n = \theta_k$. Therefore, by
Remark~\ref{iter_transfer},
\[
\varphi_k\left(h\left(n 2^{-J}\right)\right) =
  \varphi_k(\theta_k) = x_k = z_n.
\]
Hence, if $N_0$ is large enough,
\begin{equation}\label{finalz}
\PER{a}_{-J}[n]
 \approx 2^{-J/2}\varphi_{k}(h(n 2^{-J})) = 2^{-J/2} z_n
\end{equation}
for $n = 0,\dots,2^{J}-1$. This gives the necessary approximation of
the coefficients $\PER{a}_{-J}[n]$ to initialize the algorithm.

\subsection{The algorithm for the
System~\eqref{equkelle}}\label{sec::arasi_alg}
In view of the previous sections and Section~\ref{sec::TheAlgorithm}, we
present the algorithm to estimate regularities, in terms of the Besov
spaces $\Bes[s]$, for the System~\eqref{equkelle}.

\begin{algorithm}\label{algfinal}
Let $\Fse$ be a skew product under the assumptions
of the System~\eqref{equkelle}. Fix $J>0$ and a transient $N_0>0$ big
enough. To estimate the regularity of the invariant function,
$\varphi$, of the System~\eqref{equkelle} perform the following steps:
\step{1} \emph{Generation of a mesh of points exponentially close to
$\varphi$}. By Theorem~\ref{thm:Keller}(f), fix $\sigma > 1$, choose a
random $\theta_0 \in [0,1)$ and $x_0 > 1$ and, by using the recurrence
$(\theta_n,x_n)=\Fse(\theta_{n-1},x_{n-1}),$ generate the data
\[
\set{(\theta_n,x_n)}{n = N_0,N_0+1,\dots,N_0+2^{J}-1}.
\]

\step{2} \emph{Sort the data}. Using Remark~\ref{futnut}, sort the
above data to obtain a sequence
$\{(\widetilde{\theta}_n,z_n)\}_{n=0}^{2^{J}-1}$ so that
\[
0 \leq \widetilde{\theta}_0 < \widetilde{\theta}_1 <
       \dots < \widetilde{\theta}_{2^{J} -1} < 1,
\]
and delete the concrete values of the points $\widetilde{\theta}_n.$
This defines a map $\varrho$, using Remark~\ref{conjugacy} and
Proposition~\ref{conjjectura}, with the same regularity that the map
$\varphi$ such that $\varrho\left(n 2^{-J}\right) \approx z_n$ for $n =
0,\dots,2^{J}-1$.

\step{3} \emph{FWT's initialization}. Set $\PER{a}_{-J}[n]:=2^{-J/2}
z_n$ for $n = 0,\dots,2^{J}-1$. By Equation~\eqref{finalz} this is a
good approximation of the coefficients $\PER{a}_{-J}[n]$.

Now, since we have an approximation of $\PER{a}_{-J}[n]$, Steps~2--5 of
the strategy performed on Section~\ref{sec::TheAlgorithm} remain
unaltered. That is,

\step{4} \emph{FWT procedure}. Use Equation~\eqref{FWT} to calculate
the coefficients
$
\PER{d}_{-j}[n] = \SP{\PER{f},\psi_{-j,n}}
$
for $j=0,\dots,J-1$ and $0 \leq n \leq 2^{j}-1$.

\step{5} \emph{Application of Theorem~\ref{util}: data to compute the 
regularity}.
By using the coefficients $\PER{d}_{-j}[n]$ from Step~4, calculate
\[
s_{-j} = \log_{2}\left(
   \sup_{0\leq n \leq2^{j}-1} \abs{\PER{d}_{-j}[n]}
\right)
\]
for $j=0,\dots,J-1$.
\step{6} \emph{Application of Remark~\ref{LinearRegression}: compute the 
regularity}.
Make a linear regression to estimate the slope $\tau$ of the
graph of the pairs $(-j,s_{-j})$ with $j=0,\dots,J-1$. Then, when there
is evidence of linear correlation between the variables $-j$ and
$s_{-j}$, we set $s = \Reg(\tau).$

\step{7} \emph{Final test of assumptions of Theorem~\ref{util} on the 
number of vanishing moments}.
If $k > \max(s,5/2-s)$ then $f\in\Bes(\R)$ and, hence, $f$ has
regularity $s$. Otherwise we need to repeat Step 4 -- 7 with a
Daubechies wavelet having a larger value of $k$ until $k >
\max(s,5/2-s).$
\end{algorithm}

\begin{remark}\label{remalg}
We want to underline that the above algorithm can be performed for other
systems. The reason is that the ``\emph{regularity steps}'',
\textbf{Steps 4--7}, are valid for $f\in\Liin(\SI)$ due to 
Theorem~\ref{util}. Also
notice that \textbf{Steps 1--3} can be skipped if the wavelet 
coefficients
are obtained using other methods such as solving the Invariance Equation
for example. Indeed, recall that the graph of a function
{\map{\varphi}{\SI}[\R]} is invariant for the System~\eqref{equkelle} if
and only if $\varphi$ is a fixed \emph{point} of $\mathfrak{T}$. That
is:
\[
f_{\sigma}(\varphi(\Rot{\omega}{-1}(\theta))\cdot
g_{\varepsilon}(\Rot{\omega}{-1}(\theta))=
\mathfrak{T}(\varphi)(\theta) = \varphi(\theta).
\]
The above equation is called the \emph{Invariance Equation}. To give
an approximation of $\varphi$ on a certain mesh of points
$\theta_i\in\SI$, one can solve a non-linear system of equations by
imposing
\[
\varphi(\theta_i) =
a_0+\sum_{j=0}^{N}\sum_{n=0}^{N_j}d_{j,n}\psi_{j,n}
(\theta_i)
\]
on the Invariance Equation.
The unknowns, like in the Fast Wavelet Transform, are the wavelet
coefficients.
\end{remark}

As a result, we can get an estimate of the regularity of the (strange)
attractor of System~\eqref{keller-GOPYe} for the chosen value of
$\sigma$ and $\varepsilon.$ This will be the main topic of the following
section.
\section{Results and conclusions}\label{sec::Conclusions}
We have performed an exercise which is divided in two steps. Our
testing ground will be the System~\eqref{keller-GOPYe}. The range of
values for $\sigma$ is $[1,2]$ whereas $\varepsilon$ is given by the
function
\[
\varepsilon(\sigma)=\begin{cases}
    (\sigma-1.5)^2 & \text{when $1.5< \sigma \le 2,$}\\
           0       & \text{when $1 \le \sigma \le 1.5.$}
                    \end{cases}
\]
That is, we will use the System
\[
\begin{pmatrix} \theta_{k+1}\\x_{k+1}\end{pmatrix} =
\mathfrak{F}_{\sigma,\varepsilon(\sigma)} (\theta_{k},x_{k}) =
\begin{pmatrix}
   R_{\omega}(\theta_k)\\
   2\sigma\tanh(x_k)\cdot(\varepsilon(\sigma) +
    \abs{\cos(2\pi\theta_k)})
\end{pmatrix}.
\]
Notice that with this parametrization, the above system is pinched if
and only if $\sigma \in [1, 1.5]$. Roughly speaking, the parameters
$(\sigma,\varepsilon(\sigma))$ control the vertical Lyapunov Exponent of
$x\equiv0$ and the pinching condition at the same time. Hence, in view
of Proposition~\ref{Fita_s} we can test the quality of the wavelet
coefficients given by Algorithm~\ref{algfinal}. Indeed, we know that
for $\sigma\in[1.5,2.0]$ the estimated value of $s$ must be close to
$1$; whereas for $\sigma\in[1.0,1.5]$ the regularity parameter $s$ must
be zero. Moreover, when $\sigma$ crosses $1.5$, in a decreasing way,
the regularity parameter $s$ has to jump from $1$ to $0$. Hence, the
results obtained by Algorithm~\ref{algfinal} applied to our problem
and the fact that we can observe the jump of $s$ at $\sigma=1.5$
certify the quality of the algorithm that we have developed. Hence, we
can have a degree of accuracy of the wavelet coefficients.

To do this test, first we have applied Algorithm~\ref{algfinal} with
$N_0=10^{5}$ and $J = 30$. In Figure~\ref{directa} we plot the estimated
regularities of System~\eqref{keller-GOPYe} as a function of $\sigma$
with the above parametrization. Notice that, the estimated regularity
detects the jump at $\sigma=1.5$ in a correct way and agrees with the
values described above despite of the fact that the concrete values
of the regularity are not correct for $\sigma \gtrapprox 1.5.$
\begin{figure}[ht]
\begin{center}
\includegraphics[width=0.7\textwidth]{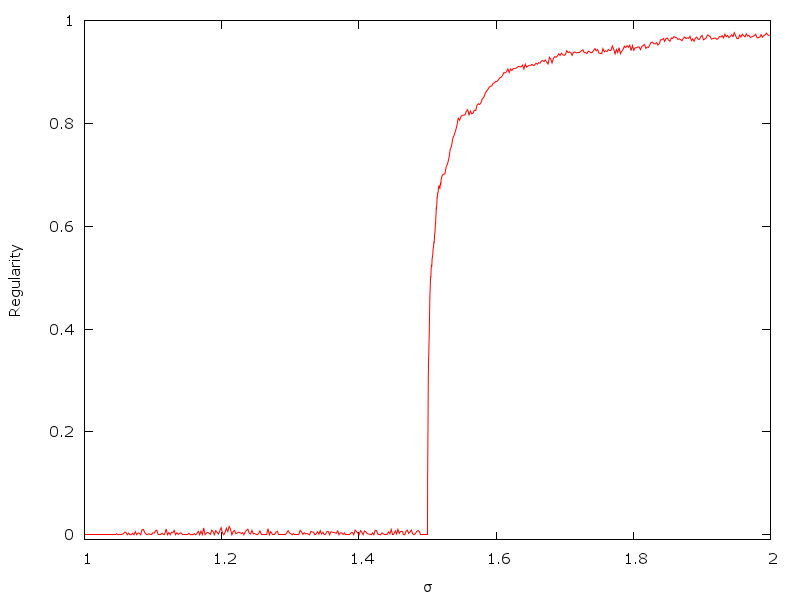}
\caption{The estimate of the regularity $\Reg(\widetilde{s})$ of the
(strange) attractor of System~\eqref{keller-GOPYe} for $\sigma \in
[1,2]$ and $\varepsilon$ given by the parametrization
$\varepsilon(\sigma)$. The results are obtained by using a sample of
$2^{30}$ points (that is, $J=30$), a transient $N_0=10^{5}$ and the
Daubechies wavelet with 16 vanishing moments. For this number of
vanishing moments we obtain the minimum variance of Pearson correlation
coefficient.}\label{directa}
\end{center}
\end{figure}

The choice of the Daubechies wavelet that we will use must be done 
carefully. Indeed,
recall that Daubechies wavelets must have more than $\max(s,5/2-s)$
vanishing moments in order to be under the assumptions of
Theorem~\ref{util}. However, the increase of the number of vanishing
moments, $k$, causes an increment of the support size of the wavelet
(see~\cite[Theorem 7.3]{Mallat}). On the other side, the support of the
map $\varphi$ is $\SI=\R\setminus\Z=[0,1)$. Observe that, the ratio of
growth between the support of $\varphi$ and $\psi_{-j,n}$ is $1$ to
$\tfrac{2k-1}{2^{j}}$. Therefore, there exists $j_0$ such that for
$j>j_0$ the support of $\psi_{-j,n}$ is contained in $[0,1)$. In view of
that, the first coefficients of the wavelet approximation will be
affected by an error induced by the size of the support of $\psi$. To
avoid such ``\emph{bad performance}'', a good strategy is to choose
different wavelets for different parameters as it can be guessed in
Figure~\ref{cutre01}. Having said that, Figure~\ref{directa} (and its
forthcoming discussion) it is done with Daubechies wavelet with 16
vanishing moments. This  choice is based in two reasons. The first one
is that all Daubechies wavelets used generate a similar picture as
Figure~\ref{directa}. On the other side, such wavelet explains
``\emph{better}'' all the range of $\varepsilon$ (even the close to zero
case as it can be seen in Figure~\ref{cutre01}).

\begin{figure}[ht]
\begin{center}
\includegraphics[width=0.7\textwidth]{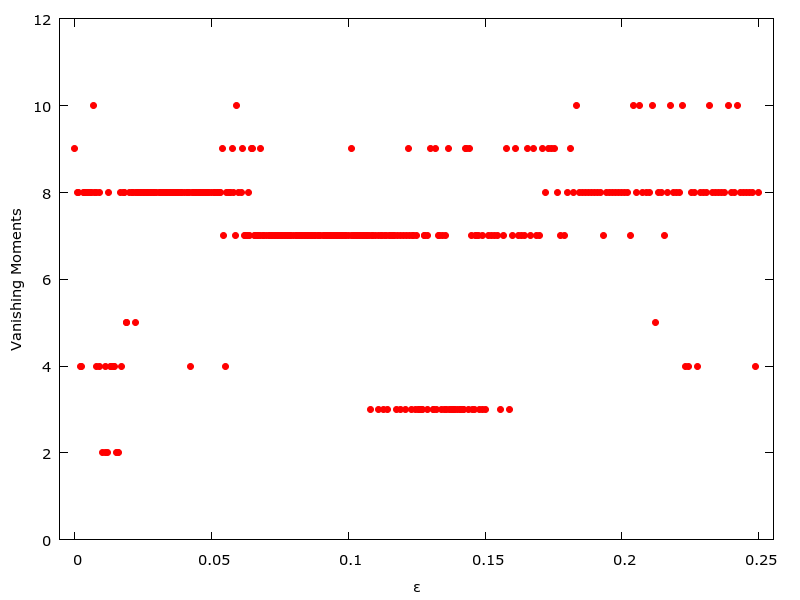}
\caption{The dot at level $n$ above an $\varepsilon$-value means that 
the
Pearson correlation coefficient applying Algorithm~\ref{algfinal} with
Daubechies wavelets of $2n$ vanishing moments is the biggest one (always
greater than $0.99$ and biggest in comparison with the Daubechies
wavelets of $k\neq 2n$ vanishing moments).}\label{cutre01}
\end{center}
\end{figure}

\begin{remark}
Figure~\ref{cutre01} has another interpretation. Indeed, the Daubechies
wavelets with 14 and 16 vanishing moments explain (in terms of
Equation~\eqref{LineqReg}) almost all the range of values of
$\varepsilon$. However, there are regions of $\varepsilon$ where other
wavelets are better than these ones. This means that a good strategy to
perform Algorithm~\ref{algfinal} in a more accurate way is to have a
\emph{dictionary} of wavelets. Notice that such \emph{dictionary}
cannot exists in the Fourier setting.
\end{remark}

The second step of the exercise, and in view of the results displayed
in Figure~\ref{directa}, is the explanation of the three regions that
appear. Indeed, in Figure~\ref{directa} one can clearly appreciate three
regions with different qualitative behaviour. One of them corresponds to
the pinched case (i.e. $\sigma \in [1,1.5]$) and the other two to the
non-pinched one: $\sigma \in (1.5,\widetilde{\sigma})$ and $\sigma \in
[\widetilde{\sigma},2]$ with $\widetilde{\sigma} \approx 1.527$. In what
follows we discuss in detail these three regions.

\subsubsection*{Non pinched case: $\sigma\in[\widetilde{\sigma},2]$}
In this region we have $\varepsilon = (\sigma-1.5)^2 \gtrapprox 7.29
\times 10^{-4}$. That is, we are ``\emph{far}'' from the pinched case.
As we already know, see Proposition~\ref{Fita_s}, the function $\varphi$
whose graph is the attractor is continuous but not differentiable.
Moreover, since we are far from the pinched case, $\varphi$ is rather
well behaved since we have lack of differentiability only in few points
(see the left picture of Figure~\ref{nopinch}). This is confirmed by
the estimated regularities that, not surprisingly (see
Proposition~\ref{Fita_s}), are in the interval $(0,1)$ and ``far'' from
zero: $\Reg(\widetilde{s})\in[0.6822,0.9669]$ (see the right part of
Figure~\ref{directa}).

\begin{figure}[ht]
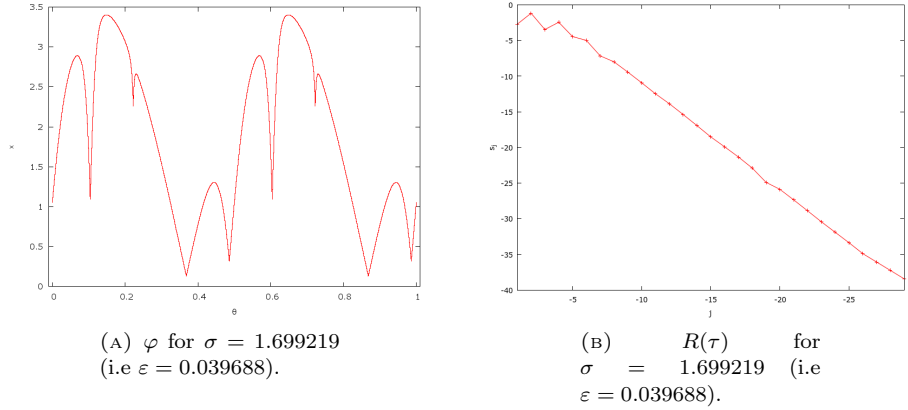

\begin{center}
\hfill \subfigdef{Keller-GOPY-NonPinched}{$\varphi$ for $\sigma =
1.699219$ (i.e $\varepsilon = 0.039688$).}
\hfill \subfigdef{LlunyPunxatKeller}{$R(\tau)$ for $\sigma = 1.699219$
(i.e $\varepsilon = 0.039688$).}
\end{center}
\caption{In the left hand side it is plotted the attractor of
System~\eqref{keller-GOPYe} for $\sigma=1.699219$ (and
$\varepsilon=0.039688$). Whereas in the right hand side it is plotted
\emph{its} pairs $(j,s_j)$ with $-29 \le j \le 0$. In this case
$\Reg(\widetilde{s})=0.91431$.}\label{nopinch}
\end{figure}

Observe that (see the right picture of Figure~\ref{nopinch}) in
agreement with the computed Pearson correlation coefficient, the model
given by Equation~\eqref{LineqReg} is approximately linear (as we
expect). Also, some few of the first coefficients are not well fitted
(for the linear model) because of the aforesaid problem with the
support of $\psi$.

\subsubsection*{The pinched case: $\sigma\in[1,1.5]$}
In this case $\varepsilon =0$. Therefore, according to
Theorem~\ref{thm:Keller} and Proposition~\ref{Fita_s}, the attractor is
pinched and, hence, discontinuous almost everywhere. That is,  the
function $\varphi$ whose graph is the attractor is an upper semi
continuous function (see the left picture of Figure~\ref{sipinch}).
Thus, in agreement with Proposition~\ref{Fita_s}, the estimated
regularity is equal to zero for the whole range of parameters as it can
be guessed in the left part of Figure~\ref{directa}.

\begin{figure}[ht]
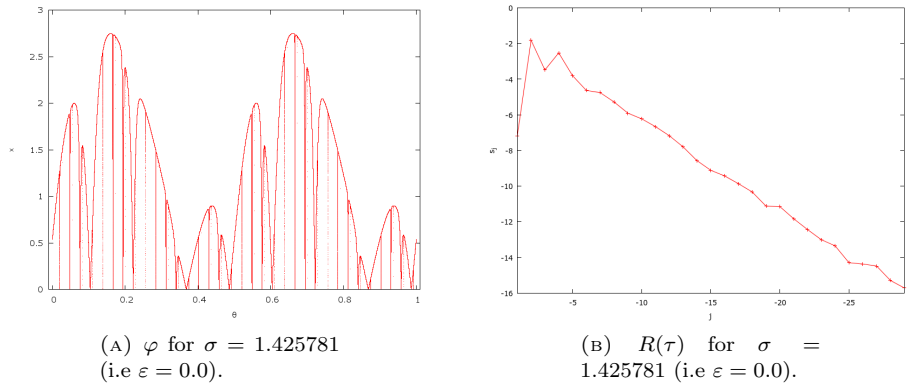

\begin{center}
\hfill \subfigdef{Keller-GOPY}{$\varphi$ for $\sigma =
1.425781$ (i.e $\varepsilon = 0.0$).}
\hfill \subfigdef{PunxatKeller}{$R(\tau)$ for $\sigma =
1.425781$ (i.e $\varepsilon = 0.0$).}
\end{center}
\caption{In the left hand side it is plotted the attractor of
System~\eqref{keller-GOPYe} for $\sigma=1.425781$ (and
$\varepsilon=0.0$). Whereas in the right hand side it is plotted
\emph{its} pairs $(j,s_j)$ with $-29 \le j \le 0$. In this case
$\Reg(\widetilde{s})=0.0149$.}\label{sipinch}
\end{figure}

Moreover, even being out of the hypothesis of Lemma~\ref{FWT-InApprox}
the coefficients obtained are almost linear. Indeed, from
Theorem~\ref{util}, the model given by Equation~\eqref{LineqReg} has
more freedom because there is a gap in $[-1/2,1/2]$ (see
Theorem~\ref{util}). However, see the right picture of
Figure~\ref{sipinch}, the model is linear (except for a few first
values as before). This means that the proposed solutions of Problem
(1) and (2) does not add error on our computations (as the case of the
left picture of Figure~\ref{sipinch}).

\subsubsection*{Approaching pinching case:
$\sigma \in (1.5,\widetilde{\sigma})$}
In this region we have $\varepsilon = (\sigma-1.5)^2 \lessapprox 7.29
\times 10^{-4}$. That is, we are ``\emph{close}'' to the pinched case.
Therefore, since $g_{\varepsilon}=\varepsilon+\abs{\cos(2\pi\theta)}$ is
Lipschitz, by Proposition~\ref{Fita_s}, the regularity must have a jump
from $0$ to $1$. Thus, in the estimated regularities one must
perceive such jump. Having said that, the estimated regularities in
this region are not so good (see the caption of Figure~\ref{nsipinch}).
However, looking at Figure~\ref{directa} and~\ref{nsipinch}, the pass
from $0$ to $1$ in a ``\emph{fast way}'' is still observed.

\begin{figure}[ht]
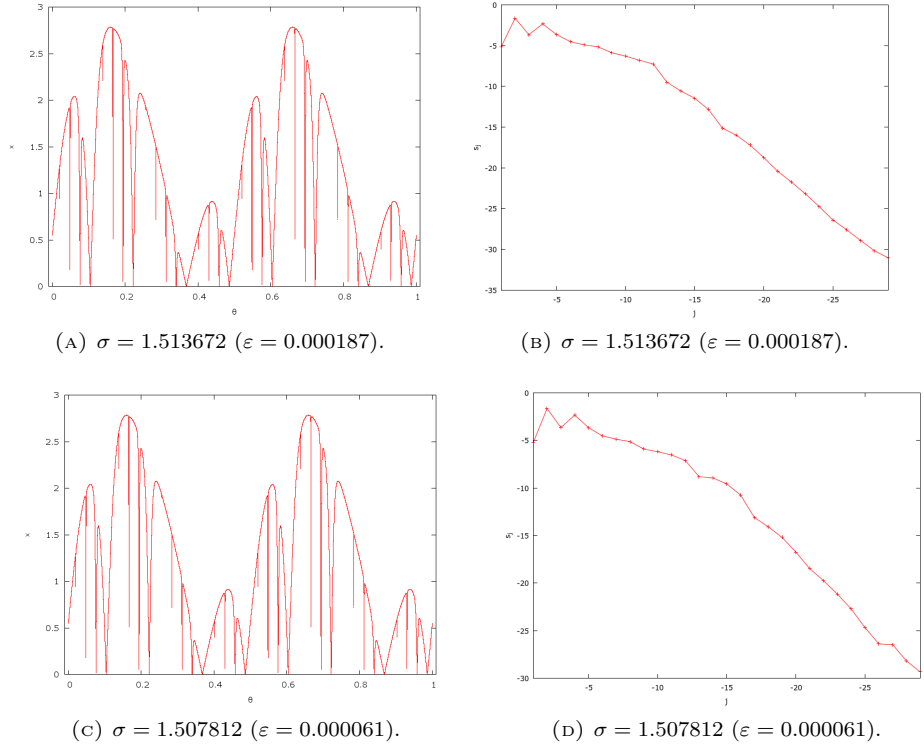

\begin{center}
\hfill \subfigdef{KGOPYQP1}{$\sigma = 1.513672$
($\varepsilon = 0.000187$).}
\hfill \subfigdef{ApropPunxatKeller}{$\sigma = 1.513672$
($\varepsilon = 0.000187$).}
\hfill \strut \\
\hfill \subfigdef{KGOPYQP1}{$\sigma = 1.507812$
($\varepsilon = 0.000061$).}
\hfill \subfigdef{MoltApropPunxatKeller}{$\sigma = 1.507812$
($\varepsilon = 0.000061$).}
\end{center}
\caption{In the left hand side it is plotted the attractor of
System~\eqref{keller-GOPYe} for two instances of $\sigma\in
(1.5,\widetilde{\sigma})$. Whereas in the right hand side it is plotted
\emph{its} pairs $(j,s_j)$ with $-29 \le j \le 0$. The estimated
regularity of the attractors is $\Reg(\widetilde{s})=0.6266$ and
$\Reg(\widetilde{s})=0.4951$ respectively.}\label{nsipinch}
\end{figure}

Observe that, as in the other cases, the first few values of the 
supremums $s_j$
are the worst fitted (see the right pictures
in Figure~\ref{nsipinch}). But, in contrast with the previous regions, 
the
rest of values of $s_j$ are not so ``\emph{well behaved}''. 
Nevertheless, they
have a big Pearson correlation coefficient. This ``\emph{bad 
behaviour}''
is probably due to a big value of the constant $C>0$ of 
Theorem~\ref{util}.
Indeed, for such range of values, we are close to the change of space
from positive regularity to zero. That is, from
Remark~\ref{LinearRegression} the constant $C>0$ tends to infinity when
$\varphi$ is approaching to the pinched case. This is precisely what it
is shown in Figure~\ref{cutre02}.

\begin{figure}[ht]
\begin{center}
\includegraphics[width=0.45\textwidth]{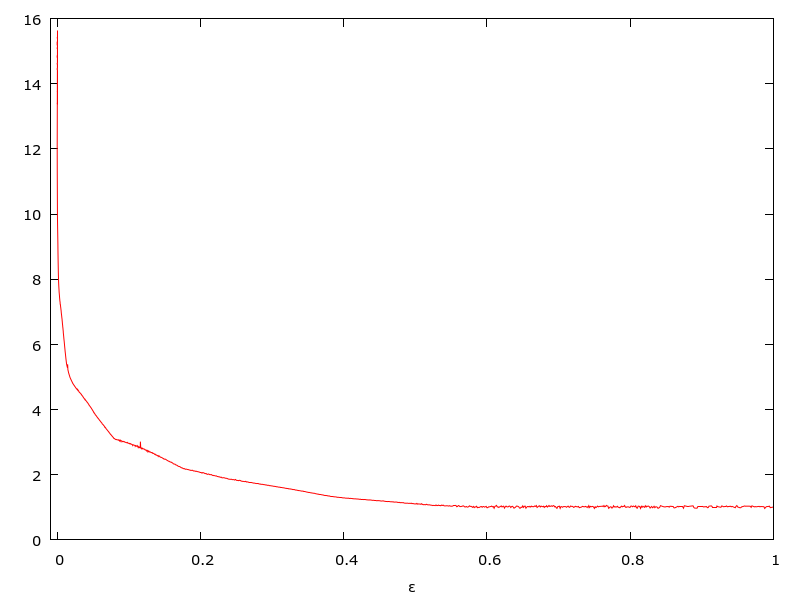}
\caption{The plot of the ``\emph{functional space jump}'', in terms of
the norm of  the attractor of System~\eqref{keller-GOPYe}, for
$\varepsilon\in(0,1]$ and $\sigma=1.5$. The constant $C>0$ becomes
unbounded as Remark~\ref{LinearRegression} asserts.}\label{cutre02}
\end{center}
\end{figure}

Finally, recall that the Fast Wavelet Transform is not the unique way
to obtain the wavelet coefficients.
In a forthcoming paper we will explore
the technique of solving numerically the Invariance Equation given in
Remark~\ref{remalg} which can be more adapted to the dynamical 
complexity
of the object and to better recover the large set of zeros of the 
\textsf{SNA}.
Moreover, it is also interesting to explore other models and understand 
other
``routes to complexity'' for the \textsf{SNA}'s.
In particular the study of the arc length curve (see \cite{JoTa})
or the Hausdorff dimension (see \cite{GrJa}) by means of wavelets
can help understanding some of these routes to \emph{strangeness}.
\bibliographystyle{alpha}
\bibliography{bibliografiaFWT}

\begin{thebibliography}{GOPY84}

\bibitem[BL76]{BeLo}
J{\"o}ran Bergh and J{\"o}rgen L{\"o}fstr{\"o}m.
\newblock {\em Interpolation spaces. {A}n introduction}.
\newblock Springer-Verlag, Berlin, 1976.
\newblock Grundlehren der Mathematischen Wissenschaften, No. 223.

\bibitem[Coh03]{Cohen}
Albert Cohen.
\newblock {\em Numerical analysis of wavelet methods}, volume~32 of {\em
  Studies in Mathematics and its Applications}.
\newblock North-Holland Publishing Co., Amsterdam, 2003.

\bibitem[dlLP02]{LlaPe}
Rafael de~la Llave and Nikola~P. Petrov.
\newblock Regularity of conjugacies between critical circle maps: an
  experimental study.
\newblock {\em Experiment. Math.}, 11(2):219--241, 2002.

\bibitem[Fra99]{Frazier}
Michael~W. Frazier.
\newblock {\em An introduction to wavelets through linear algebra}.
\newblock Undergraduate Texts in Mathematics. Springer-Verlag, New York, 1999.

\bibitem[GJ13]{GrJa}
M.~Gr{\"o}ger and T.~J{\"a}ger.
\newblock Dimensions of {A}ttractors in {P}inched {S}kew {P}roducts.
\newblock {\em Comm. Math. Phys.}, 320(1):101--119, 2013.

\bibitem[GOPY84]{GOPY}
Celso Grebogi, Edward Ott, Steven Pelikan, and James~A. Yorke.
\newblock Strange attractors that are not chaotic.
\newblock {\em Phys. D}, 13(1-2):261--268, 1984.

\bibitem[HW96]{HeWe}
Eugenio Hern{\'a}ndez and Guido Weiss.
\newblock {\em A first course on wavelets}.
\newblock Studies in Advanced Mathematics. CRC Press, Boca Raton, FL, 1996.
\newblock With a foreword by Yves Meyer.

\bibitem[Jor01]{jorba}
{\`A}ngel Jorba.
\newblock Numerical computation of the normal behaviour of invariant curves of
  {$n$}-dimensional maps.
\newblock {\em Nonlinearity}, 14(5):943--976, 2001.

\bibitem[JT08]{JoTa}
{\`A}ngel Jorba and Joan~Carles Tatjer.
\newblock A mechanism for the fractalization of invariant curves in
  quasi-periodically forced 1-{D} maps.
\newblock {\em Discrete Contin. Dyn. Syst. Ser. B}, 10(2-3):537--567, 2008.

\bibitem[Kel96]{Kell}
Gerhard Keller.
\newblock A note on strange nonchaotic attractors.
\newblock {\em Fund. Math.}, 151(2):139--148, 1996.

\bibitem[Mal98]{Mallat}
St{\'e}phane Mallat.
\newblock {\em A wavelet tour of signal processing}.
\newblock Academic Press Inc., San Diego, CA, 1998.

\bibitem[Mey01]{Meyer}
Yves Meyer.
\newblock {\em Oscillating patterns in image processing and nonlinear evolution
  equations}, volume~22 of {\em University Lecture Series}.
\newblock American Mathematical Society, Providence, RI, 2001.
\newblock The fifteenth Dean Jacqueline B. Lewis memorial lectures.

\bibitem[RS96]{RuSi}
Thomas Runst and Winfried Sickel.
\newblock {\em Sobolev spaces of fractional order, {N}emytskij operators, and
  nonlinear partial differential equations}, volume~3 of {\em de Gruyter Series
  in Nonlinear Analysis and Applications}.
\newblock Walter de Gruyter \& Co., Berlin, 1996.

\bibitem[Sta97]{Star2}
Jaroslav Stark.
\newblock Invariant graphs for forced systems.
\newblock {\em Phys. D}, 109(1-2):163--179, 1997.
\newblock Physics and dynamics between chaos, order, and noise (Berlin, 1996).

\bibitem[Sta99]{Star1}
Jaroslav Stark.
\newblock Regularity of invariant graphs for forced systems.
\newblock {\em Ergodic Theory Dynam. Systems}, 19(1):155--199, 1999.

\bibitem[Ste70]{Stein}
Elias~M. Stein.
\newblock {\em Singular integrals and differentiability properties of
  functions}.
\newblock Princeton Mathematical Series, No. 30. Princeton University Press,
  Princeton, N.J., 1970.

\bibitem[Tri83]{Tri01}
Hans Triebel.
\newblock {\em Theory of function spaces}, volume~78 of {\em Monographs in
  Mathematics}.
\newblock Birkh\"auser Verlag, Basel, 1983.

\bibitem[Tri92]{Tri03}
Hans Triebel.
\newblock {\em Theory of function spaces. {II}}, volume~84 of {\em Monographs
  in Mathematics}.
\newblock Birkh\"auser Verlag, Basel, 1992.

\bibitem[Tri06]{Tri02}
Hans Triebel.
\newblock {\em Theory of function spaces. {III}}, volume 100 of {\em Monographs
  in Mathematics}.
\newblock Birkh\"auser Verlag, Basel, 2006.

\end{thebibliography}
\end{document}